\newcommand{\Zint}{\mathbb {Z}}    
\newcommand{\R}{\mathbb {R}}      
\newcommand{\halmos}{\rule{5pt}{5pt}}
\numberwithin{equation}{section}
\newtheorem{prop}{\bf Proposition}[section]
\newtheorem{thm}[prop]{\bf Theorem}
\theoremstyle{definition}
\begin{document}
\title[Ultradiscrete limit of the spectral polynomial]
{Ultradiscrete limit of the spectral polynomial of the $q$-Heun equation}
\author{Kentaro Kojima}
\author{Tsukasa Sato}
\author{Kouichi Takemura}
\address{Department of Mathematics, Faculty of Science and Engineering, Chuo University, 1-13-27 Kasuga, Bunkyo-ku Tokyo 112-8551, Japan}
\email{takemura@math.chuo-u.ac.jp}
\subjclass[2010]{39A13,33E10,30C15}
\keywords{q-Heun equation, polynomial solution, ultradiscrete limit, q-difference equation, Heun equation, Ruijsenaars system}
\begin{abstract}
It is known that the $q$-Heun equation has polynomial-type solutions in some special cases, and the condition for the accessory parameter $E$ is described by the roots of the spectral polynomial. 
We investigate the spectral polynomial by considering the ultradiscrete limit.
\end{abstract}
\maketitle

\section{Introduction}
A $q$-difference analogue of Heun's differential equation was introduced by Hahn \cite{Hahn} in the form
\begin{equation}
a(x) g(x/q) + b(x) g(x) + c(x) g(qx) =0
\label{eq:axgbxgcxg}
\end{equation}
such that $a(x)$, $b(x)$, $c(x)$ are polynomials such that $\deg_x a(x)= \deg_x c(x)=2 $, $a(0) \neq 0 \neq c(0) $ and $\deg _x b(x) \leq 2$.
It was rediscovered in \cite{TakR} by the fourth degeneration of Ruijsenaars-van Diejen system (\cite{vD0,RuiN}) or by specialization of the linear difference equation associated to the $q$-Painlev\'e VI equation (\cite{JS}).
We adopt the expression of the $q$-Heun equation as
\begin{align}
& (x-q^{h_1 +1/2} t_1) (x- q^{h_2 +1/2} t_2) g(x/q)  \label{eq:RuijD5} \\
& + q^{\alpha _1 +\alpha _2} (x - q^{l_1-1/2}t_1 ) (x - q^{l_2 -1/2} t_2) g(qx) \nonumber \\
& -\{ (q^{\alpha _1} +q^{\alpha _2} ) x^2 + E x + q^{(h_1 +h_2 + l_1 + l_2 +\alpha _1 +\alpha _2 )/2 } ( q^{\beta /2}+ q^{-\beta/2}) t_1 t_2 \} g(x) =0, \nonumber
\end{align}
which was employed in \cite{TakqH,KST}.
The parameter $E$ is called the accessory parameter.
Note that we recover Heun's differential equation by the limit $q\to 1 $ (\cite{Hahn,TakR}).
Recently, the $q$-Heun equation appears in the study of degenerations of the Askey-Wilson algebra (\cite{BVZ}).

Solutions of the $q$-Heun equation were considered in \cite{TakqH,KST}.
We investigate a solution of the $q$-Heun equation written as
\begin{equation}
f(x)= x^{\lambda _1} \sum _{n=0}^{\infty } c_n (E)  x^n , \; \lambda _1 = \frac{h_1 +h_2 -l_1-l_2 -\alpha _1-\alpha _2 -\beta +2}{2}.
\label{eq:A4la1la2}
\end{equation}
Note that the value $\lambda _1 $ is one of the exponents of $q$-Heun equation about $x=0$.
Then the coefficients $c_{n} (E)$ $(n=1,2,\dots )$ are determined recursive by 
\begin{align}
& c_n (E) t_1 t_2 [ q^{h_1 +h_2 } ( 1 - q^{n }) ( 1 - q^{n -\beta }) ] \label{eq:rec01}  \\
& = c_{n-1} (E)  [E q^{n -1 +\lambda _1} + q^{1/2}(q^{h_1 } t_1 +q^{h_2 } t_2 ) +  (q^{l_1 } t_1 +q^{l_2 } t_2 ) q^{2(n +\lambda _1) +\alpha _1 +\alpha _2 -5/2 }]  \nonumber \\
& - c_{n-2} (E) [ q (1 - q^{n -2 +\lambda _1 +\alpha _1})(1 - q^{n-2 +\lambda _1 + \alpha _2}) ] , \nonumber 
\end{align}
with the initial condition $c_0(E)=1$ and $c_{-1} (E)=0$ (see \cite{KST}).
If we regard $E$ as an indeterminant, then $c_n (E)$ is a polynomial of $E $ of degree $n$.
The polynomial type solution of the $q$-Heun equation, which is written as a terminating series, is described as follows.
\begin{prop} (\cite{KST}) \label{prop:prop0}
Let $\lambda _1$ be the value in Eq.(\ref{eq:A4la1la2}) and assume that $-\lambda _1 - \alpha _1 (:=N)$ is a non-negative integer and $\beta \not \in \{ 1,2,\dots ,N, N+1\}$.
Set $c_{-1}(E)=0 $,  $c_0(E)=1 $ and we determine the polynomials $c_n(E)$ $(n=1,\dots ,N+1)$ recursively by Eq.(\ref{eq:rec01}) 
If $E=E_0$ is a solution of the algebraic equation
\begin{equation}
c_{N+1} (E)=0 ,
\label{eq:cN+1}
\end{equation}
then $q$-Heun equation defined in Eq.(\ref{eq:RuijD5}) has a non-zero solution of the form
\begin{equation}
f(x)= x^{\lambda _1} \sum _{n=0}^{N } c_n (E_0)  x^n .
\label{eq:gxxlapol}
\end{equation}
\end{prop}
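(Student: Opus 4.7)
The plan is to substitute the polynomial $f(x) = x^{\lambda_1} \sum_{n=0}^N c_n(E_0) x^n$ directly into the $q$-Heun equation (\ref{eq:RuijD5}) and verify that the coefficient of every power of $x$ on the left-hand side vanishes. After the indicial coefficient (the one associated to $x^{\lambda_1}$) has been dealt with by the definition of $\lambda_1$ in (\ref{eq:A4la1la2}), the equation at index $n \geq 1$ is precisely the three-term recurrence (\ref{eq:rec01}). For $1 \le n \le N$, this vanishes by the recursive construction of $c_n(E_0)$; the prefactor $t_1 t_2 q^{h_1+h_2}(1-q^n)(1-q^{n-\beta})$ on the left side of (\ref{eq:rec01}) is nonzero by the hypothesis $\beta \notin \{1,\ldots,N+1\}$ together with the tacit genericity of $q$. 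At $n = N+1$, the polynomial contributes no $c_{N+1}$ term, so the equation collapses to $c_N(E_0)[\ldots] - c_{N-1}(E_0)[\ldots] = 0$, which by construction of $c_{N+1}(E)$ equals $c_{N+1}(E_0)$ times the nonzero prefactor; thus it holds iff $c_{N+1}(E_0)=0$, which is the hypothesis.

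The heart of the argument lies in the $n = N+2$ equation. Since $c_{N+2}$ and $c_{N+1}$ are both absent from $f(x)$, the only surviving contribution is the $c_N$ term with coefficient $q(1-q^{N+\lambda_1+\alpha_1})(1-q^{N+\lambda_1+\alpha_2})$. The defining identity $N = -\lambda_1 - \alpha_1$ forces the first factor to equal $1 - q^0 = 0$, so this coefficient vanishes automatically---this is the structural mechanism that makes polynomial truncation possible at precisely this degree. For $n \geq N+3$, every coefficient $c_k$ with $k > N$ appearing in the equation is zero by construction, and the equation reduces to $0 = 0$. The solution $f(x)$ is nonzero because $c_0(E_0) = 1$.

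The main obstacle I anticipate is not the verification itself, which is essentially mechanical, but the initial derivation of the recurrence (\ref{eq:rec01}) from (\ref{eq:RuijD5}). One must substitute $g(x/q)$, $g(x)$, and $g(qx)$, multiply by the three $q$-dependent quadratic polynomials in $x$, and carefully collect coefficients, keeping track of many $q$-exponents. Once that translation is in hand, the role of each hypothesis becomes transparent: $\beta \notin \{1,\ldots,N+1\}$ keeps the recursion well defined up through $c_{N+1}$, and $N = -\lambda_1-\alpha_1 \in \mathbb{Z}_{\geq 0}$ is precisely what makes the truncation at $n = N+2$ work.
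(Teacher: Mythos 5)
Your argument is correct: direct substitution of the truncated series into Eq.~(\ref{eq:RuijD5}), identification of the coefficient equations with the recurrence (\ref{eq:rec01}), and the observation that the factor $1-q^{n-2+\lambda_1+\alpha_1}$ vanishes at $n=N+2$ because $N=-\lambda_1-\alpha_1$ is exactly the standard proof of this truncation statement. Note that the paper itself gives no proof, importing the proposition from \cite{KST}; also, since $0<q<1$ is assumed throughout, no ``genericity of $q$'' is needed beyond the hypothesis $\beta\notin\{1,\dots,N+1\}$ to ensure $(1-q^n)(1-q^{n-\beta})\neq 0$.
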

We call $c_{N+1} (E)$ the spectral polynomial of the $q$-Heun equation.

In general it would be impossible to solve the roots of the spectral polynomial $c_{N+1} (E)$ explicitly.
To understand the roots of the spectral polynomial, we may apply the ultradiscrete limit $q \to +0$.
In \cite{KST}, the behaviour of the roots $E=E_1, E_2, \dots ,E_{N+1} $ of $c_{N+1} (E)=0 $ by the ultradiscrete limit was studied in some cases.
As $q \to 0$, the roots satisfy 
\begin{equation}
E_k \sim -cq^{d-k} ,  \quad k=1,2,\dots ,N+1
\label{eq:Ekintro}
\end{equation}
for some $c \in \R _{>0}$ and $d \in \R $ in those cases.

In this paper we investigate the roots of the spectral polynomial as $q\to +0$ in more cases, which was partially done in \cite{Koj,Sat}.
Namely we obtain results in the three cases in sections \ref{sec:sub1}, \ref{sec:sub2} and \ref{sec:sub3}.
Note that the roots of the spectral polynomial do not satisfy the asymptotics as Eq.(\ref{eq:Ekintro}) in several cases.

This paper is organized as follows.
In section \ref{sec:Equiv}, we introduce two kinds of equivalences on the limit $q\to +0$, which are used to analyze the coefficients of the polynomials $c_j (E)$ $(j=1,2,\dots ,N+1)$.
In section \ref{sec:UDL}, we investigate the polynomials $c_j (E)$ $(j=1,2,\dots ,N+1)$ and the roots of the spectral polynomial $c_{N+1}(E)$ as $q\to +0 $ by dividing into three cases.
In section \ref{sec:rem}, we give concluding remarks. 

Throughout this paper, we assume $0<q<1$.
\section{Equivalences on the ultradiscrete limit} \label{sec:Equiv}

As discussed in \cite{KST}, we define the equivalence $\sim $ of functions of the variable $q$ by 
\begin{equation}
a(q) \sim b (q) \; \Leftrightarrow \; \lim _{q \to +0} \frac{a(q)}{b(q)} =1.
\end{equation}
We also define the equivalence $\sum _{j=0}^{M} a_j(q) E^j \sim \sum _{j=0}^{M} b_j (q) E^j $ of the polynomials of the variable $E$ by $ a_j (q) \sim b _j(q) $ for $j=0,\dots ,M$.
If $\sum _{j=0}^{M} a_j(q) E^j \sim \sum _{j=0}^{M} c_j q^{\mu _j} E^j $ for some $c_j \in \R \setminus \{ 0\}$ $(j=0,1,\dots ,M)$, then we call $\sum _{j=0}^{M} c_j q^{\mu _j} E^j $ the leading terms of $\sum _{j=0}^{M} a_j(q) E^j $.
We introduce another equivalence by 
\begin{equation}
a(q) \approx b (q) \; \Leftrightarrow \; \exists C >0 \mbox{ such that } \lim _{q \to +0} \frac{a(q)}{b(q)} =C ,
\end{equation}
and the equivalence $\sum _{j=0}^{M} a_j(q) E^j \approx \sum _{j=0}^{M} b_j (q) E^j $ by $ a_j (q) \approx b _j(q) $ for $j=0,\dots ,M$.

We are going to find simpler forms of the polynomials $c_{n} (E)$ $(n=1,2,\dots ,N+1)$ determined by Eq.(\ref{eq:rec01}) with respect to the equivalence $\sim $ or $\approx $. 
For simplicity, we assume 
\begin{equation}
N= -\lambda _1 -\alpha _1 \in \Zint _{\geq 0}, \; \beta <1, \; \alpha _2-\alpha _1<1, \; t_1>0 , \; t_2>0, \; h_1 <h_2 , \; l_1<l_2,
\label{eq:assump}
\end{equation} 
throughout this paper, which was also assumed in \cite{KST}.
It follows from Eq.(\ref{eq:rec01}) that the polynomials $c_{n} (E)$ satisfy
\begin{align}
c_n (E) \sim & \: t_1 ^{-1} t_2 ^{-1} [  E q^{n -1 -h_1 -h_2 +\lambda _1} + t_1 q^{1/2 -h_2 } + t_1 q^{ 2n -1/2 -l_2 - \beta  }] c_{n-1} (E) \label{eq:rec00}   \\
&  - t_1^{-1} t_2^{-1} q^{2n-1 -l_1-l_2 -\beta } c_{n-2} (E) \nonumber
\end{align}
for $ n=1,2,\dots ,N+1$ under the assumption that there are no cancellation of the leading terms of the coefficients of $E^j$ $(j=0,1,\dots ,n-1)$ in the right hand side with respect to the limit $q \to +0$.
In \cite{KST}, the leading terms of $c_{n} (E)$ $(n=1,2,\dots ,N+1)$ were investigated for the case ($1 +h_2 -l_2 - \beta >0 $ and $2+ 2 h_2  - l_1 - l_2 -\beta >0  $) and the case ($2N+1 +h_2 -l_2 -\beta <0  $ and $2N + l_1 - l_2 -\beta <0 $).

\section{Analysis of the spectral polynomial by the ultradiscrete limit} \label{sec:UDL}

We investigate the leading terms of $c_{n} (E)$ $(n=1,2,\dots ,N+1)$ for three cases with weaker conditions in the sections \ref{sec:sub1}, \ref{sec:sub2} and \ref{sec:sub3}.

\subsection{The case $1 +h_2 -l_2 - \beta >0 $} \label{sec:sub1} $ $ 

If $1 +h_2 -l_2 - \beta >0 $ and the condition in Eq.(\ref{eq:assump}) is satisfied, then it follows from Eq.(\ref{eq:rec00}) that
\begin{align}
 c_n (E) \sim & \: t_1 ^{-1} t_2 ^{-1} ( E q^{n -1 -h_1 -h_2 +\lambda _1} + t_1 q^{1/2 -h_2 } ) c_{n-1} (E) \label{eq:rec0101} \\
&  - t_1^{-1} t_2^{-1} q^{2n-1 -l_1-l_2 -\beta } c_{n-2} (E) \nonumber 
\end{align}
for $ n=1,2,\dots ,N+1$ under the assumption that there are no cancellation of the leading terms of the coefficients of $E^j$ $(j=0,1,\dots ,n-1)$ in the right hand side with respect to the limit $q \to +0$.
We investigate a sufficient condition that there are no cancellation of the leading terms of the coefficients of $E^j$ $(j=0,1,\dots ,n-1)$.

Since $c_0 (E) =1$ and $c_{-1} (E)=0$, the leading terms of $c_1 (E)$ are described by
\begin{align}
& c_1 (E) \sim t_1 ^{-1} t_2 ^{-1} ( E q^{\lambda _1 -h_1-h_2} + t_1 q^{-h_2 +1/2} ) ,
\end{align}
and we do not need any conditions for no cancellation of the leading terms.
By applying Eq.(\ref{eq:rec0101}), we have 
\begin{align}
c_2 (E) \sim & \: t_1 ^{-2} t_2 ^{-2} ( E q^{1 -h_1 -h_2 +\lambda _1} + t_1 q^{1/2 -h_2 } )( E q^{-h_1 -h_2 +\lambda _1} + t_1 q^{1/2 -h_2 } ) \label{eq:c2Ev1} \\
& - t_1^{-1} t_2^{-1} q^{3 -l_1-l_2 -\beta } \nonumber
\end{align}
under the assumption that there are no cancellation of the leading terms.
In this case, the cancellation of the leading terms may occur on the coefficient of $E^0$ and the candidate for the cancellation is $t_2 ^{-2} q^{1 -2 h_2 } - t_1^{-1} t_2^{-1} q^{3 -l_1-l_2 -\beta } $.  
If $1-2h_2 \neq 3-l_1 -l_2 -\beta $, i.e. $ 2 h_2  - l_1 - l_2 -\beta \neq -2 $, then there are no cancellation of the leading terms.
If $2+ 2 h_2  - l_1 - l_2 -\beta >0 $, then we may ignore the term $t_1^{-1} t_2^{-1} q^{3 -l_1-l_2 -\beta } $ and we have
\begin{align}
& c_2 (E) \sim t_1 ^{-2} t_2 ^{-2} ( E q^{1 -h_1 -h_2 +\lambda _1} + t_1 q^{1/2 -h_2 }) ( E q^{-h_1 -h_2 +\lambda _1} + t_1 q^{1/2 -h_2 } ) .
\end{align}
The leading terms of the polynomials $c_n (E)$ were studied in \cite{KST} for the case $1 +h_2 -l_2 - \beta >0 $ and $2+ 2 h_2  - l_1 - l_2 -\beta >0  $.
If  $2+ 2 h_2  - l_1 - l_2 -\beta <0  $, then we have
\begin{align}
& c_2 (E) \sim t_1 ^{-2} t_2 ^{-2} [ q^{1 -2h_1 -2h_2 +2 \lambda _1} E^2 + t_1 q^{1/2 -h_2 } q^{-h_1 -h_2 +\lambda _1} E  - t_1 t_2 q^{3 -l_1-l_2 -\beta } ] .
\end{align}
We assume $2 h_2  - l_1 - l_2 -\beta \neq -2 $. 
It follows from Eq.(\ref{eq:rec0101}) for $n=3$ that
\begin{align}
& c_3 (E) \sim c_{2} (E) t_1^{-1} t_2^{-1} ( E q^{2 +\lambda _1 - h_1 - h_2} +  t_1 q^{-h_2 +1/2} ) - c_{1} (E)  t_1^{-1} t_2^{-1} q^{5 -l_1 - l_2 -\beta }  \\
& \sim t_1^{-3} t_2^{-3} ( E q^{2 +\lambda _1 - h_1 - h_2} +  t_1 q^{-h_2 +1/2} ) ( E q^{1 +\lambda _1  -h_1-h_2} +  t_1 q^{-h_2 +1/2} ) \nonumber \\
& \qquad \qquad ( E q^{\lambda _1 -h_1-h_2} +  t_1 q^{-h_2 +1/2} ) \nonumber \\
& \quad -  t_1^{-2} t_2^{-2} q^{5 -l_1 - l_2 -\beta  } ( E q^{\lambda _1 -h_1-h_2} +  t_1 q^{-h_2 +1/2} ) \nonumber \\
& \quad -  t_1^{-2} t_2^{-2} q^{3 -l_1 - l_2 -\beta  } ( E q^{2 +\lambda _1 - h_1 - h_2} +  t_1 q^{-h_2 +1/2} ) \nonumber \\
& \sim  t_1^{-3} t_2^{-3}[ E^3 q^{3 +3(\lambda _1 - h_1 - h_2)} + E^2 t_1 q^{1 +2 (\lambda _1  -h_1-h_2)} q^{-h_2 +1/2} \nonumber \\
& \quad  +E  t_1^2 q^{\lambda _1  -h_1-h_2} q^{2(-h_2 +1/2)}+ t_1 ^{3} q^{3(-h_2 +1/2)} \nonumber \\
& \quad  - t_1 t_2 q^{-l_1 - l_2 -\beta } ( 2 E q^{5 +\lambda _1 - h_1 - h_2} + t_1 q^{3-h_2 +1/2} ) ] \nonumber 
\end{align}
under the assumption that there are no cancellations of the leading terms.
In this case, the cancellation of the leading terms may occur on the coefficients of $E^1$ and $E^0$.
Hence if $2 h_2  - l_1 - l_2 -\beta \not \in \{ -4,-2 \}$, then the cancellation of the leading terms does not occur.
On the polynomial $c_4(E)$, we have
\begin{align}
& c_4 (E) \sim c_{3} (E) t_1^{-1} t_2^{-1} ( E q^{3 +\lambda _1 - h_1 - h_2} +  t_1 q^{-h_2 +1/2} ) -c_{2} (E)  t_1^{-1} t_2^{-1} q^{7 -l_1 - l_2 -\beta  } \label{eq:c4-1} \\
& \sim t_1^{-4} t_2^{-4} ( E q^{\lambda _1 -h_1-h_2} +  t_1 q^{-h_2 +1/2} ) ( E q^{1 +\lambda _1  -h_1-h_2} +  t_1 q^{-h_2 +1/2} ) \nonumber \\
& \qquad \qquad ( E q^{2 +\lambda _1 - h_1 - h_2} + t_1 q^{-h_2 +1/2} )  ( E q^{3 +\lambda _1 - h_1 - h_2} +  t_1 q^{-h_2 +1/2} ) \nonumber \\
& \quad - t_1^{-3} t_2^{-3} q^{3 -l_1 - l_2 -\beta } (  E q^{2 +\lambda _1 - h_1 - h_2} + t_1 q^{-h_2 +1/2} ) ( E q^{3 +\lambda _1 - h_1 - h_2} +  t_1 q^{-h_2 +1/2} ) \nonumber \\
& \quad - t_1^{-3} t_2^{-3} q^{5 -l_1 - l_2 -\beta } (  E q^{\lambda _1 - h_1 - h_2} + t_1 q^{-h_2 +1/2} ) ( E q^{3 +\lambda _1 - h_1 - h_2} +  t_1 q^{-h_2 +1/2} ) \nonumber \\
& \quad - t_1^{-3} t_2^{-3} q^{7 -l_1 - l_2 -\beta  } ( E q^{\lambda _1  -h_1-h_2} +  t_1 q^{-h_2 +1/2} ) ( E q^{1+ \lambda _1 -h_1-h_2} +  t_1 q^{-h_2 +1/2} ) \nonumber \\
& \quad + t_1^{-2} t_2^{-2} q^{7 -l_1 - l_2 -\beta  } q^{3 -l_1 - l_2 -\beta  } \nonumber \\
& \sim t_1^{-4} t_2^{-4} [ E^4 q^{6 +4(\lambda _1 - h_1 - h_2)} + E^3 t_1 q^{3(\lambda _1 - h_1 - h_2)} q^{3-h_2 +1/2} \nonumber  \\ 
& \quad + E^2 t_1^{2} q^{2(\lambda _1 - h_1 - h_2)} q^{1+ 2(-h_2 +1/2)}  + E t_1^{3} q^{\lambda _1 - h_1 - h_2} q^{3(-h_2 +1/2)} \nonumber \\
& \quad + t_1 ^{4} q^{4(-h_2 +1/2)} - t_1 t_2 q^{-l_1 - l_2 -\beta } \{ 3 E^2  q^{8 + 2(\lambda _1 - h_1 - h_2)}  \nonumber \\
& \qquad \qquad +  2 E  t_1 q^{5-h_2 +1/2} q^{\lambda _1 - h_1 - h_2} + t_1 ^{2} q^{3+2(-h_2 +1/2)} \} + t_1^{2} t_2^{2} q^{10 -2(l_1 + l_2 +\beta ) } ] \nonumber 
\end{align}
under the assumption that there are no cancellations of the leading terms.
The cancellation of the leading terms may occur on the coefficients of $E^2$, $E^1$ and $E^0$, and the cancellation of the leading terms does not occur under the condition $2 h_2  - l_1 - l_2 -\beta \not \in \{ -6, -4,-2 \}$.
On the polynomial $c_n (E)$, we have the following proposition.
\begin{prop} \label{prop:noncancel01}
Let $M \in \{ 1,2, \dots , N+1 \}$.
A sufficient condition that the cancellation of the leading terms in the right hand side of Eq.(\ref{eq:rec0101}) does not occur for $n=1,2,\dots ,M$ is written as $2 h_2  - l_1 - l_2 -\beta \not \in \{ -2M+2, -2M+4, \dots ,-4,-2 \}$.
\end{prop}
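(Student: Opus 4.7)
The plan is to argue by induction on $n$, from the base case $n=1$ up to $n=M$. For $n=1$ the recurrence (\ref{eq:rec0101}) reduces to $c_1(E) \sim t_1^{-1}t_2^{-1}(E q^{\lambda_1-h_1-h_2} + t_1 q^{1/2-h_2})$ because $c_{-1}(E)=0$ kills the $c_{n-2}$ term, so only one contribution appears in each $E^j$-coefficient and no cancellation is possible.

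For the inductive step I would set $A = \lambda_1-h_1-h_2$, $B = 1/2-h_2$, and $\gamma = 2h_2 - l_1 - l_2 - \beta$. Assuming the conclusion for $c_1,\ldots,c_{n-1}$, iterating the recurrence yields an expansion of each $c_m$ ($m<n$) as a sum over ``paths'' obtained by choosing at each step either the $U$-factor or the $V$-factor; from this the leading $q$-exponent of the coefficient of $E^j$ can be read off. When the pure-product branch dominates, this exponent equals $\pi(m,j) := (m-j)B + jA + j(j-1)/2$, matching the leading term of $t_1^{-m}t_2^{-m}\prod_{k=0}^{m-1}(Eq^{k+A}+t_1q^B)$. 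I would then split the right-hand side of (\ref{eq:rec0101}) at step $n$ into three contributions to the $E^j$-coefficient: (A) from $Eq^{n-1+A}c_{n-1}(E)/(t_1 t_2)$ via the $E^{j-1}$-coefficient of $c_{n-1}$; (B) from $t_1 q^B c_{n-1}(E)/(t_1 t_2)$ via the $E^j$-coefficient of $c_{n-1}$; and (C) from $-q^{2n-1-l_1-l_2-\beta}c_{n-2}(E)/(t_1 t_2)$ via the $E^j$-coefficient of $c_{n-2}$. A direct calculation gives their leading $q$-exponents as $\pi(n,j)+(n-j)$, $\pi(n,j)$, and $\pi(n,j)+2(n-1)+\gamma$ respectively. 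For $j<n$ the exponent of (A) strictly exceeds that of (B) by $n-j>0$, whereas (B) equals (C) exactly when $\gamma = -2(n-1)$. Hence, provided $\gamma \neq -2(n-1)$, the three exponents are pairwise distinct, a single contribution realizes the leading $E^j$-coefficient of the RHS and it is nonzero, so no cancellation occurs at step $n$.

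Combining the inductive steps, excluding $\gamma$ from $\{-2,-4,\ldots,-2M+2\}$ blocks the newly introduced resonance at every $n=2,\ldots,M$, which is what the proposition asserts. The main obstacle will be ensuring the inductive hypothesis supplies the correct leading-term data for $c_{n-1}$ and $c_{n-2}$ when $\gamma$ is so negative that the dominant $E^j$-contribution comes not from the pure-product branch but from a correction path (one or more $V$-invocations); in that situation the expressions for (A), (B), (C) must be recomputed with those actual dominant paths in place of the pure product. The key observation, verifiable by the path-by-path bookkeeping illustrated by the explicit formulas for $c_2, c_3, c_4$ earlier in the section, is that the only \emph{new} resonance introduced at step $n$ is $\gamma = -2(n-1)$: the remaining coincidences of $q$-exponents either involve paths with the same number of $V$-invocations (whose contributions share a sign $(-1)^s$ and hence add rather than cancel) or are already controlled by the inductive hypothesis at an earlier step.
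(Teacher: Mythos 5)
Your overall strategy (iterate the two-term recurrence to expand $c_n$ as a signed sum over paths, then show that for each $E^j$-coefficient a single path realizes the minimal $q$-exponent) is the same as the paper's, but there is a genuine gap at the decisive step. The exponents you compute for the contributions (A), (B), (C) --- $\pi(n,j)+(n-j)$, $\pi(n,j)$ and $\pi(n,j)+2(n-1)+\gamma$ --- are valid only when the pure-product branch (no $V$-invocations) dominates every coefficient of $c_{n-1}$ and $c_{n-2}$, which happens essentially only for $\gamma>-2$; that is exactly the case already settled in \cite{KST}, while the proposition is aimed at $\gamma<-2$, where for instance the $E^0$-coefficient of $c_m$ ($m\ge2$) is already dominated by a path with one $V$-invocation. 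Moreover, your exponent for (C) is that of the path which uses the second-order step at the \emph{last} stage ($r_nc_{n-2}$), whereas among the one-$V$ paths the dominant one uses it at the \emph{earliest} possible stage, giving exponent $\pi(n,j)+2j+2+\gamma$ rather than $\pi(n,j)+2(n-1)+\gamma$. Hence the comparison ``(B) $=$ (C) iff $\gamma=-2(n-1)$'' locates a coincidence of subdominant terms; the actual near-cancellations are between the $k$- and $k'$-branches ($k\neq k'$ counting $V$-invocations) and occur at $\gamma=-2(k+k'+l)$.

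What you defer to ``path-by-path bookkeeping'' is precisely the content that must be proved, and it is where the paper does its work: one first shows that $r_m(p_{m+1}E+q_{m+1})$ is stronger than $(p_{m-1}E+q_{m-1})r_{m+1}$, so that for fixed $k$ the strongest path is $r_2r_4\cdots r_{2k}(p_{2k+1}E+q_{2k+1})\cdots(p_ME+q_M)$, whose $E^l$-coefficient has exponent $d(k,l)=\pi(M,l)+k(2k+2l+\gamma)$; interference between branches then forces $d(k,l)=d(k',l)$, i.e.\ $\gamma=-2(k+k'+l)$ with $0<k+k'+l<M$, which is exactly the excluded set. Your induction could be repaired by building this dominant-path lemma into the inductive data (recording, for each $j$, which branch realizes the leading exponent of $[E^j]c_{n-1}$ and $[E^j]c_{n-2}$ and with what sign), but as written the argument does not establish the statement in the only regime where it is new.
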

\begin{proof}
We can write the recursive relation as 
\begin{equation}
c_n(E) = (p_n E +q_n) c_{n-1} (E) -r_n c_{n-2} (E),
\label{eq:cnpnqnrn}
\end{equation}
where
\begin{equation}
r_n \sim t_1^{-1} t_2^{-1} q^{2n-1 -l_1 - l_2 -\beta }, \; p_n \sim t_1^{-1} t_2^{-1} q^{n-1 +\lambda _1 - h_1 - h_2} , \; q_n \sim  t_2 ^{-1} q^{-h_2 +1/2} .
\end{equation}
By applying Eq.(\ref{eq:cnpnqnrn}) repeatedly, we have
\begin{equation}
c_M (E) \sim \sum _{0\leq k \leq M/2} (-1)^k \sum \!  ' \prod _{l=1}^k r_{n_l +1} \prod _m \! '' (p_m E+q_m) ,
\label{eq:cMsum}
\end{equation}
where the summation $\sum ' $ is over the integers $1\leq n_1<n_2 < \dots <n_k \leq M-1 $ such that $n_l - n_{l-1} \geq 2 $ for $l=2,\dots k$ and the product $\prod \! '' _m  $ is over the integer $1 \leq m \leq M$ such that $m \not \in \{ n_l , n_l +1 \}$ for $l=1,\dots k$.
The term $r_n (p_{n+1} E+q _{n+1}) \sim t_1^{-2} t_2^{-2} q^{2n-1 -l_1 - l_2 -\beta } (q^{n +\lambda _1 - h_1 - h_2} E+ t_1 q^{-h_2 +1/2}) $ is stronger than $(p_{n-1} E+q _{n-1}) r_{n+1} \sim t_1^{-2} t_2^{-2} q^{2n-1 -l_1 - l_2 -\beta } (q^{n +\lambda _1 - h_1 - h_2} E+ t_1 q^{-h_2 +5/2}) $.
By applying this relation repeatedly, we find that the strongest term in Eq.(\ref{eq:cMsum}) for the fixed $k$ is contained in
\begin{align}
& r_2 r_4 \dots r_{2k} ( p_{2k +1} E +q_{2k+1}) ( p_{2k +2} E +q_{2k+2}) \dots ( p_{M} E +q_{M}) \\
& \approx  \sum _{0 \leq l \leq M-2k } r_2 r_4 \dots r_{2k}  p_{2k +1} p_{2k+2} \dots p_{2k+l} q_{2k+l+1} \dots q_{M} E^l \nonumber \\
& \approx  \sum _{0 \leq l \leq M-2k } q^{d(k,l)} E^l, \nonumber 
\end{align}
where $d(k,l) = k(2k+ 1- l_1 - l_2 -\beta ) + l(l-1)/2 + l (2k -h_1-h_2+\lambda _1 ) + (M-2k-l) (-h_2 +1/2 )$.
Hence we have
\begin{equation}
c_M (E) \approx \sum _{l ,k  \geq 0,  0 \leq l +2k \leq M } (-1)^k  q^{d(k,l)} E^l  .
\end{equation}
If the cancellation of the leading terms occur, then we have $d(k,l) = d(k',l)$ for $k \neq k'$, i.e.,
$2(k+k') + 2l + 2 h_2 -  (l_1 + l_2 +\beta ) = 0$  for $k \neq k'$, $0 \leq 2k +l \leq M$ and $0 \leq 2k' +l \leq M$.
Since $k\neq k'$, we have $0< k+k' +l <M$ and a sufficient condition that the cancellation of the leading terms does not occur is written as $2 h_2  - l_1 - l_2 -\beta \not \in \{ -2M+2, -2M+4, \dots ,-4,-2 \}$.
\end{proof}
We consider the expression of the leading terms roughly by using the equivalence $\approx $ under the condition of Proposition \ref{prop:noncancel01}.
On the polynomials $c_2(E)$ and $c_3(E)$, we have 
\begin{align}
  c_2 (E) \sim & \: t_1^{-2} t_2^{-2} ( E q^{1 +\lambda _1  -h_1-h_2} +  t_1 q^{-h_2 +1/2} )( E q^{\lambda _1 -h_1-h_2} +  t_1 q^{-h_2 +1/2} )  \label{eq:c2gen} \\
&  - t_1^{-1} t_2^{-1} q^{3 -l_1 - l_2 -\beta  } \nonumber \\
\approx & \: ( E q^{1 +\lambda _1  -h_1-h_2} + q^{-h_2 +1/2} )( E q^{\lambda _1 -h_1-h_2} + q^{-h_2 +1/2} ) - q^{3 -l_1 - l_2 -\beta  },
 \nonumber \\
 c_3 (E) \sim & \: t_1^{-3} t_2^{-3} ( E q^{2 +\lambda _1 - h_1 - h_2} + t_1 q^{-h_2 +1/2} ) ( E q^{1 +\lambda _1  -h_1-h_2} +  t_1 q^{-h_2 +1/2} ) \nonumber \\ 
& \qquad \qquad ( E q^{\lambda _1 -h_1-h_2} +  t_1 q^{-h_2 +1/2} )  \nonumber \\ 
& - t_1^{-2} t_2^{-2} q^{3 -l_1 - l_2 -\beta } ( 2 E q^{2 +\lambda _1 - h_1 - h_2} + t_1 q^{-h_2 +1/2} ) \nonumber  \\
\approx & \: ( E q^{2 +\lambda _1 - h_1 - h_2} + q^{-h_2 +1/2} ) ( E q^{1 +\lambda _1  -h_1-h_2} + q^{-h_2 +1/2} ) ( E q^{\lambda _1 -h_1-h_2} + q^{-h_2 +1/2} ) \nonumber \\ 
& - q^{3 -l_1 - l_2 -\beta } ( E q^{2 +\lambda _1 - h_1 - h_2} + q^{-h_2 +1/2} ) \nonumber \\
\approx & \: ( E q^{2 +\lambda _1 - h_1 - h_2} +  q^{-h_2 +1/2} ) c_2 (E) . \nonumber 
%
\end{align}
We also have
\begin{align}
 c_4 (E) \sim & \: c_{3} (E) t_1^{-1} t_2^{-1} ( E q^{3 +\lambda _1 - h_1 - h_2} +  t_1 q^{-h_2 +1/2} ) -c_{2} (E) t_1^{-1} t_2^{-1} q^{7 -l_1 - l_2 -\beta  }\\
 \approx & \: c_2 (E) [ ( E q^{3 +\lambda _1 - h_1 - h_2} + q^{-h_2 +1/2} ) ( E q^{2 +\lambda _1 - h_1 - h_2} +  q^{-h_2 +1/2} ) - q^{7 -l_1 - l_2 -\beta } ].  \nonumber 
%
\end{align}
These relations are generalized as follows.
\begin{prop} \label{prop:reccnappr01}
Assume that $1 +h_2 -l_2 - \beta >0 $ and $2 h_2  - l_1 - l_2 -\beta \not \in \{ -2N, -2N+2 ,\dots ,-4,-2 \}$.\\
(i) If $n \in \Zint _{\geq 1}$ and $2n <N+1 $, then
\begin{align}
 c_{2n} (E) \approx & \: c_{2n-2} (E) [ ( E q^{2n-1 +\lambda _1 - h_1 - h_2} +q^{-h_2 +1/2} ) \\
& \qquad \qquad ( E q^{2n-2 +\lambda _1 - h_1 - h_2} +q^{-h_2 +1/2} ) - q^{4n-1 -l_1 - l_2 -\beta } ] . \nonumber 
\end{align}
(ii) If $n \in \Zint _{\geq 1}$ and $2n <N $, then
\begin{align}
& c_{2n+1} (E) \approx c_{2n} (E) ( E q^{2n +\lambda _1 - h_1 - h_2} +q^{-h_2 +1/2} ) .
\end{align}
\end{prop}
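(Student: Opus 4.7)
The plan is to leverage the explicit envelope formula extracted in the proof of Proposition \ref{prop:noncancel01}. That argument identified, for each fixed $k$, a unique dominant configuration $(n_1,\dots,n_k)=(1,3,\dots,2k-1)$ in the expansion (\ref{eq:cMsum}); under the no-cancellation hypothesis $2h_2-l_1-l_2-\beta\notin\{-2N,\dots,-2\}$, this means that the coefficient of every $E^l$ in $c_M(E)$ is $\approx$ to that of
\begin{equation*}
A_M(E):=\sum_{0\le k\le M/2}(-1)^k\,r_2 r_4\cdots r_{2k}\prod_{m=2k+1}^{M}(p_m E+q_m),
\end{equation*}
with $p_m,q_m,r_m$ as in (\ref{eq:cnpnqnrn}). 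Thus $c_M(E)\approx A_M(E)$ coefficient by coefficient for every $M\le N+1$, and it suffices to establish both identities at the level of $A_M$.

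For part (ii), the factor $(p_{2n+1}E+q_{2n+1})$ is the $m=2n+1$ factor of the product, and it appears in every summand of $A_{2n+1}(E)$ because $2k+1\le 2n+1$ for all $0\le k\le n$. Pulling it out gives the algebraic identity $A_{2n+1}(E)=(p_{2n+1}E+q_{2n+1})\,A_{2n}(E)$, and combining with $c_{2n+1}\approx A_{2n+1}$ and $c_{2n}\approx A_{2n}$ yields the stated equivalence.

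For part (i), the sum $A_{2n}(E)$ is split by isolating the $k=n$ contribution. For $k\le n-1$ the product $\prod_{m=2k+1}^{2n}(p_m E+q_m)$ contains both $(p_{2n-1}E+q_{2n-1})$ and $(p_{2n}E+q_{2n})$ as factors, so pulling these out and summing over $k\le n-1$ gives $(p_{2n-1}E+q_{2n-1})(p_{2n}E+q_{2n})\,A_{2n-2}(E)$; the $k=n$ contribution is the constant $(-1)^n r_2\cdots r_{2n}$. A direct rearrangement then produces
\begin{equation*}
A_{2n}(E) = A_{2n-2}(E)\bigl[(p_{2n-1}E+q_{2n-1})(p_{2n}E+q_{2n})-r_{2n}\bigr] + r_{2n}\bigl[A_{2n-2}(E)-(-1)^{n-1}r_2\cdots r_{2n-2}\bigr],
\end{equation*}
in which the inner bracket equals $A_{2n-2}(E)$ with its $(k,l)=(n-1,0)$ summand removed, so the residual only modifies the constant-in-$E$ coefficient of $r_{2n}A_{2n-2}(E)$.

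The main obstacle is to verify that this residual is of strictly lower $q$-order than the principal first term, coefficient by coefficient in $E$; this is precisely the content of the claimed $\approx$. It reduces to comparing values of the quadratic exponent $d(k,l)$ of Proposition \ref{prop:noncancel01}: for $l\ge 1$ the extra weight $(p_{2n-1}E+q_{2n-1})(p_{2n}E+q_{2n})$ or $-r_{2n}$ of the bracket in the principal term overwhelms the single factor $r_{2n}$ attached to the residual, and the hypothesis $1+h_2-l_2-\beta>0$ together with the no-cancellation condition on $2h_2-l_1-l_2-\beta$ supplies the required strict gap; for $l=0$ the removal of the $k=n-1$ summand forces the dominant $k$ in the truncated $A_{2n-2}(E)$ down to $k=n-2$, and the same no-cancellation condition produces strict dominance. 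Translating back via $c_M\approx A_M$ then concludes the argument.
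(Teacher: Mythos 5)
Your route is genuinely different from the paper's. The paper proves both parts by a short induction on the recursion Eq.(\ref{eq:rec0101}): it substitutes the inductive hypothesis $c_{2m+1}(E)\approx c_{2m}(E)(Eq^{2m+\lambda_1-h_1-h_2}+q^{-h_2+1/2})$ into the next step of the recursion and discards one visibly weaker term. You instead work with the closed-form envelope $A_M$ extracted from Eq.(\ref{eq:cMsum}) and reduce both parts to exact algebraic identities plus a single domination estimate. The identities $A_{2n+1}(E)=(p_{2n+1}E+q_{2n+1})A_{2n}(E)$ and your rearrangement of $A_{2n}(E)$ are correct, and $c_M(E)\approx A_M(E)$ does follow from the proof of Proposition \ref{prop:noncancel01} under the stated hypotheses; this is a legitimate and arguably more transparent organization of the argument.

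The gap is in the domination estimate, which you rightly call the crux but then justify incorrectly: the residual $r_{2n}\bigl[A_{2n-2}(E)-(-1)^{n-1}r_2\cdots r_{2n-2}\bigr]$ is \emph{not} of strictly higher $q$-order than the principal term, coefficient by coefficient, throughout the hypothesized range. Take $n=2$, $N\geq 4$ and $2h_2-l_1-l_2-\beta<-6$ (consistent with $1+h_2-l_2-\beta>0$ and the exclusion of even integers, e.g.\ when $\beta<-4$): the coefficient of $E^2$ in the residual is $r_4p_1p_2\approx q^{8-l_1-l_2-\beta+2(\lambda_1-h_1-h_2)}$, which is \emph{exactly} the order of the coefficient of $E^2$ in $A_4(E)$, dominated there by $-r_2p_3p_4$. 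No strict gap exists. What saves the statement is sign coherence, not a gap: the piece $r_{2n}\times(\mbox{$k$-th summand of }A_{2n-2})$ is the non-dominant $(k+1)$-fold configuration $\{2,4,\dots,2k,2n\}$ and carries the sign $(-1)^{k}$, so subtracting it in your target contributes with sign $(-1)^{k+1}$ --- the same sign as the dominant $(k+1)$-configuration in $A_{2n}$ --- and when the orders coincide the two contributions reinforce (in the example, $-2q^{8-l_1-l_2-\beta+2(\lambda_1-h_1-h_2)}$ for the target versus $-3q^{\,\cdot}$ for $c_4$, which is still $\approx$). A parallel check is needed in part (ii): passing from $(p_{2n+1}E+q_{2n+1})A_{2n}$ to $(p_{2n+1}E+q_{2n+1})c_{2n}$ preserves $\approx$ only if $p_{2n+1}x_{l-1}$ and $q_{2n+1}x_l$ cannot nearly cancel, and one verifies they can share the leading order only when they share the sign. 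With these sign arguments supplied your proof closes; as written, the ``strict gap'' assertion is false in part of the parameter range and the $l=0$ discussion does not address the real issue.
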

\begin{proof}
The case $n=1$ in (i) and (ii) was shown by Eq.(\ref{eq:c2gen}).
We show the formulas for $n=m+1$ by assuming those for $n=m$.
It follows from $2 h_2  - l_1 - l_2 -\beta \not \in \{ -2N, -2N+2 ,\dots ,-4,-2 \}$ that there are no cancellations of the leading terms of the cofficients $E^j$ ($j=0, 1, \dots  $) on the right hand side of Eq.(\ref{eq:rec0101}) for $n=2m +2$.
Hence it follows from Eq.(\ref{eq:rec0101}) and $c_{2m+1} (E) \approx c_{2m} (E) ( E q^{2m +\lambda _1 - h_1 - h_2} +q^{-h_2 +1/2} )  $ that
\begin{align} 
 c_{2m+2} (E) \approx & \: c_{2m+1} (E) ( E q^{2m +1 +\lambda _1 - h_1 - h_2} +q^{-h_2 +1/2} ) - c_{2m} (E) q^{4m +3 -l_1 - l_2 -\beta } \nonumber \\
 \approx & \: c_{2m} (E) [( E q^{2m+1 +\lambda _1 - h_1 - h_2} +q^{-h_2 +1/2} ) \\
& \qquad \qquad (E q^{2m +\lambda _1 - h_1 - h_2} +q^{-h_2 +1/2} )- q^{4m +3 -l_1 - l_2 -\beta } ] . \nonumber 
\end{align}
Therefore we obtain (i) for $n=m+1$.

It follows from Eq.(\ref{eq:rec0101}) for $n=2m+3, 2m+2$ and $c_{2m+1} (E) \approx c_{2m} (E) ( E q^{2m +\lambda _1 - h_1 - h_2} +q^{-h_2 +1/2} )  $ that
\begin{align} 
& c_{2m+3} (E) \approx  c_{2m+2} (E) ( E q^{2m+2 +\lambda _1 - h_1 - h_2} +q^{-h_2 +1/2} )  - c_{2m+1} (E) q^{4m +5 -l_1 - l_2 -\beta } \\
& \approx  c_{2m+1} (E) ( E q^{2m +1 +\lambda _1 - h_1 - h_2} +q^{-h_2 +1/2} )( E q^{2m+2 +\lambda _1 - h_1 - h_2} +q^{-h_2 +1/2} )  \nonumber \\
& \; - c_{2m} (E) q^{4m +3 -l_1 - l_2 -\beta } ( E q^{2m+2 +\lambda _1 - h_1 - h_2} +q^{-h_2 +1/2} ) \nonumber \\
& \; -c_{2m} (E) q^{4m +5 -l_1 - l_2 -\beta }  ( E q^{2m +\lambda _1 - h_1 - h_2} +q^{-h_2 +1/2} ) . \nonumber
%
\end{align}
Since the term $q^{4m +3 -l_1 - l_2 -\beta } ( E q^{2m+2 +\lambda _1 - h_1 - h_2} +q^{-h_2 +1/2} ) $ is stronger than the term $q^{4m +5 -l_1 - l_2 -\beta }  ( E q^{2m +\lambda _1 - h_1 - h_2} +q^{-h_2 +1/2} )  $, we may neglect the term $ c_{2m} (E) q^{4m +5 -l_1 - l_2 -\beta }$ $ ( E q^{2m +\lambda _1 - h_1 - h_2} + q^{-h_2 +1/2} ) $ under the equivalence $\approx $ and we have $c_{2m+3} (E) \approx c_{2m+2} (E) $ $( E q^{2m+2 +\lambda _1 - h_1 - h_2} +q^{-h_2 +1/2} ) $.
\end{proof}
By applying Proposition \ref{prop:reccnappr01} repeatedly, we obtain the approximation of the spectral polynomial $c_{N+1} (E) $ as follows.
\begin{thm}
We assume Eq.(\ref{eq:assump}), $1 +h_2 -l_2 - \beta >0 $ and $2 h_2  - l_1 - l_2 -\beta \not \in \{ -2N, -2N+2 ,\dots ,-4,-2 \}$.
Set 
\begin{align}
p _n (E)& = ( E q^{2n-1 +\lambda _1 - h_1 - h_2} +q^{-h_2 +1/2} ) ( E q^{2n-2 +\lambda _1 - h_1 - h_2} +q^{-h_2 +1/2} ) \nonumber \\
& \qquad \qquad \qquad - q^{4n-1 -l_1 - l_2 -\beta } , \label{eq:pntildecN+1} \\
\tilde{c}_{N+1} (E) & = \left\{ \begin{array}{ll}
\displaystyle \prod_{n=1}^{(N+1)/2} p _n (E), & \mbox{$N$ is odd,} \\
\displaystyle ( E q^{N +\lambda _1 - h_1 - h_2} +q^{-h_2 +1/2} ) \prod_{n=1}^{N/2} p _n (E) , & \mbox{$N$ is even.} 
\end{array}
\right. \nonumber 
\end{align}
Then we have $c_{N+1} (E) \approx \tilde{c}_{N+1} (E)$.

\end{thm}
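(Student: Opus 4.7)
The plan is to prove this by a telescoping induction built directly on Proposition \ref{prop:reccnappr01}, together with one extra boundary step to reach index $N+1$. The essential preliminary observation is that Proposition \ref{prop:noncancel01} applied with $M=N+1$ translates the hypothesis $2h_2 - l_1 - l_2 - \beta \not\in \{-2N, -2N+2, \ldots, -2\}$ into the statement that no cancellation of leading terms occurs in Eq.(\ref{eq:rec0101}) for every $n = 1, 2, \ldots, N+1$. This legitimizes the use of $\sim$ (and hence $\approx$) in the recursion all the way up to the terminal index.

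When $N$ is odd, I would write $N+1 = 2K$ and use Proposition \ref{prop:reccnappr01}(i) iteratively, starting from $c_0(E) = 1$, to obtain $c_{2j}(E) \approx \prod_{n=1}^{j} p_n(E)$ for all $1 \leq j \leq K-1$, so in particular $c_{N-1}(E) \approx \prod_{n=1}^{(N-1)/2} p_n(E)$. The final factor $p_K(E)$ is produced by rerunning the computation inside the proof of Proposition \ref{prop:reccnappr01}(i) one more time at the boundary: invoke part (ii) at $n = (N-1)/2$ to express $c_N(E)$ in terms of $c_{N-1}(E)$, then substitute into Eq.(\ref{eq:rec0101}) for $n = N+1$. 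The non-cancellation secured above yields $c_{N+1}(E) \approx c_{N-1}(E)\, p_{(N+1)/2}(E) \approx \prod_{n=1}^{(N+1)/2} p_n(E) = \tilde{c}_{N+1}(E)$.

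When $N$ is even, I would write $N = 2K$. Iterating Proposition \ref{prop:reccnappr01}(i) as before gives $c_N(E) \approx \prod_{n=1}^{K} p_n(E)$. Then I would rerun the argument in the proof of Proposition \ref{prop:reccnappr01}(ii) once more at the boundary index: substitute the known approximations for $c_N(E)$ and $c_{N-1}(E)$ into Eq.(\ref{eq:rec0101}) for $c_{N+1}(E)$ and discard the subdominant correction exactly as in that proof. This yields $c_{N+1}(E) \approx (Eq^{N+\lambda_1 - h_1 - h_2} + q^{-h_2+1/2})\, c_N(E) = \tilde{c}_{N+1}(E)$.

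The main obstacle I expect is the bookkeeping at the terminal step: Proposition \ref{prop:reccnappr01} is stated under the strict inequalities $2n < N+1$ (respectively $2n < N$), so the step which actually reaches $c_{N+1}$ lies just outside its scope and must be supplied by redoing the proof argument. The extended non-cancellation provided by Proposition \ref{prop:noncancel01} with $M = N+1$ is precisely what allows this boundary step to go through; once that is in place, the rest is routine telescoping in each parity case.
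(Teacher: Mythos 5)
Your proof is correct and follows essentially the same route as the paper, whose entire argument is the remark that the theorem follows ``by applying Proposition \ref{prop:reccnappr01} repeatedly.'' Your extra care at the terminal index --- observing that the stated ranges $2n<N+1$ and $2n<N$ stop just short of $c_{N+1}$, so the final step must be supplied by rerunning the proposition's inductive computation using the non-cancellation guaranteed by Proposition \ref{prop:noncancel01} with $M=N+1$ --- is a point the paper glosses over, and your handling of it is sound.
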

We investigate the zeros of $p_n (E)$ as $q\to +0$.
If $ 2h_2 -l_1 - l_2 -\beta >2 -4n$ (i.e., $-2h_2 +1 < 4n-1 -l_1 - l_2 -\beta $), then 
\begin{equation}
p _n (E) \sim  q^{4n-3 +2(\lambda _1 - h_1 - h_2)} ( E +q^{-2n+3/2 -\lambda _1 + h_1 } ) ( E +q^{-2n+5/2 -\lambda _1 + h_1} ) .
\end{equation}
If $ 2h_2 -l_1 - l_2 -\beta <2 -4n$, then we have $p _n (E) \sim q^{4n-3 +2(\lambda _1 - h_1 - h_2)} (E^2 + E q^{-2n +3/2 -\lambda _1 + h_1 }  - q^{2 -l_1 - l_2 -\beta  -2\lambda _1 +2 h_1 +2 h_2} ) $.
Moreover, if $1< 4 n + 2h_2 -l_1 - l_2 -\beta  <2 $, then 
\begin{align}
p _n (E) \sim & q^{4n-3 +2(\lambda _1 - h_1 - h_2)} (E+ q^{-2n +3/2 -\lambda _1+ h_1 } ) (E- q^{2n +1/2 -\lambda _1 -l_1 - l_2 -\beta  + h_1 + 2 h_2  }).
\label{eq:14n2}
%
\end{align}
In the case $4 n + 2h_2 -l_1 - l_2 -\beta  <1 $, the polynomial $p _n (E)$ is not factorized as Eq.(\ref{eq:14n2}) as $q\to +0 $, and we solve the quadratic equation $E^2 + E q^{-2n +3/2 -\lambda _1 + h_1 }  - q^{2 -l_1 - l_2 -\beta  -2\lambda _1 +2 h_1 +2 h_2}=0 $ as $q\to 0$ by the quadratic formula.
It follows from  $ 4 n + 2h_2 -l_1 - l_2 -\beta  <1 $ that
\begin{align}
E & = \frac{q^{-2n +3/2 -\lambda _1 + h_1 } \pm \sqrt{ q^{2(-2n +3/2 -\lambda _1 + h_1 )}+ 4 q^{2 -l_1 - l_2 -\beta  -2\lambda _1 +2 h_1 +2 h_2} }}{2}  \nonumber \\
& \sim \frac{q^{-2n +3/2 -\lambda _1 + h_1 } \pm \sqrt{ 4 q^{2 -l_1 - l_2 -\beta  -2\lambda _1 +2 h_1 +2 h_2} }}{2} \\
& \sim \pm  q^{1 -l_1/2 - l_2/2 -\beta /2 -\lambda _1 + h_1 + h_2} = \pm q^{(\alpha _1 + \alpha _2  +h_1 + h_2 )/2} , \nonumber
\end{align} 
where we used $\lambda _1 = (h_1 +h_2 -l_1-l_2 -\alpha _1-\alpha _2 -\beta +2)/2$.

We describe the zeros of polynomial $\tilde{c}_{N+1} (E) $ defined in Eq.(\ref{eq:pntildecN+1}) as $q\to +0$ by dividing into parts.\\
(i) If $ 2h_{2} - l_{1} - l_{2}  -\beta > -2 $, then 
\begin{align}
& E \sim  q^{-j+3/2-\lambda _1  +h_{1}} \ \ \ \left( 1 \leq j \leq N + 1 \right).
\label{eq:asym1-1}
\end{align}
(ii-1) If $ 2h_{2} - l_{1} - l_{2}  -\beta < - 2N -1  $ and $N$ is odd, then
\begin{equation}
  E \sim - q^{( \alpha _1 +\alpha _2 +h_{1}+h_{2})/2}, \ q^{(\alpha _1 +\alpha _2 +h_{1}+h_{2})/2} \ \ \ \left( \mbox{$(N + 1)/2$-ple} \right).
\label{eq:asym1-2-1}
\end{equation}
(ii-2) If $ 2h_{2} - l_{1} - l_{2} -\beta < - 2N + 1 $, $ 2h_{2} - l_{1} - l_{2} -\beta \neq - 2N  $ and $N$ is even, then
\begin{align}
  E  \sim & - q^{(\alpha _1 +\alpha _2 +h_{1}+h_{2})/2}, \ q^{(\alpha _1 +\alpha _2 +h_{1}+h_{2})/2} \ \ \ \left( \mbox{$N/2$-ple} \right),\\
          &  - q^{-N+1/2-\lambda _1 +h_{1}}. \nonumber 
\end{align}
(iii-1) If $ - 4m+1 < 2h_{2} - l_{1} - l_{2} -\beta < - 4m + 2 $ for some $m \in \Zint $ such that $1\leq m \leq (N+1)/2$, then 
\begin{align}
  E &\sim - q^{(\alpha _1 +\alpha _2 +h_{1}+h_{2})/2}, \ q^{(\alpha _1 +\alpha _2 +h_{1}+h_{2})/2}, \ \ \ \left( \mbox{$(m-1)$-ple} \right) \\
    & \ \ \ \ q^{2m-3/2+\lambda _1+ \alpha _1 +\alpha _2 +h_{2}}, \nonumber \\
    & \ \ \ \ - q^{-j+3/2-\lambda _1 +h_{1}} \ \ \ \left( 2m \leq j \leq N + 1 \right). \nonumber 
\end{align}
(iii-2) If $ - 4m -2 <  2h_{2} - l_{1} - l_{2} -\beta < - 4m + 1 $ for some $m \in \Zint $ such that $1\leq m \leq (N-1)/2$ and $2h_{2} - l_{1} - l_{2} -\beta \neq - 4m $, then 
\begin{align}
  E &\sim - q^{(\alpha _1 +\alpha _2 +h_{1}+h_{2})/2}, \ q^{(\alpha _1 +\alpha _2 +h_{1}+h_{2})/2}, \ \ \ \left( \mbox{$m$-ple} \right) \\
    & \ \ \ \ - q^{-j+3/2-\lambda _1 +h_{1}} \ \ \ \left( 2m + 1 \leq j \leq N + 1 \right). \nonumber 
\end{align} 

We discuss the zeros of the polynomials $c_{N+1} (E) $ as $q\to +0$ by comparing with the zeros of $ \tilde{c}_{N+1} (E) $ described above.
If $ 2h_{2} - l_{1} - l_{2}  -\beta > -2 $, then it was shown in \cite{KST} that the zeros of the polynomial $c_{N+1} (E) $ are written as $E_j(q)$ $(j=1,\dots N+1)$ such that $E_j(q) \sim -t_1 q^{-j+3/2-\lambda _1 +h_{1}}$ for sufficiently small $q(>0)$.
Hence we have $E_j(q) \approx - q^{-j+3/2-\lambda _1 +h_{1}}$ for $j=1, \dots , N+1$ and it is compatible with Eq.(\ref{eq:asym1-1}).

Although the multiplicity of the roots of $\tilde{c}_{N+1} (E) $ for $q=0$ appears in the case $2h_{2} - l_{1} - l_{2}  -\beta > -7 $, the roots of $\tilde{c}_{N+1} (E) $ for $q>0$ do not have multiplicity, which follows from the real root property of the spectral polynomial discussed in \cite{KST}.
We give an example.
In the case $N=3$ and $2h_{2} - l_{1} - l_{2}  -\beta > -7 $, Eq.(\ref{eq:c4-1}) is written as 
\begin{align}
& c_4 (E) \sim t_1^{-4} t_2^{-4} [E^4  q^{6 +4(\lambda _1 - h_1 - h_2)} + E^3 t_1 q^{3(\lambda _1 - h_1 - h_2)} q^{3-h_2 +1/2} \label{eq:c4-1-1} \\ 
& \quad - t_1 t_2 q^{-l_1 - l_2 -\beta } \{ 3 E^2  q^{8 + 2(\lambda _1 - h_1 - h_2)} +  2 E  t_1 q^{5-h_2 +1/2} q^{\lambda _1 - h_1 - h_2} \} \nonumber \\
& \quad + t_1^{2} t_2^{2} q^{10 -2(l_1 + l_2 +\beta ) }] , \nonumber 
\end{align}
and the zeros of the right hand side of Eq.(\ref{eq:c4-1-1}) satisfy $E \approx  +q^{(\alpha _1 +\alpha _2 +h_{1}+h_{2})/2},$ $  - q^{(\alpha _1 +\alpha _2 +h_{1}+h_{2})/2} $ with multiplicity two (see Eq.(\ref{eq:asym1-2-1})).
To obtain more detailed asymptotics, we set $E= s q^{(\alpha _1 +\alpha _2 +h_{1}+h_{2})/2} $, substitute it into the right hand side and observe the condition that the leading term disappear. Then we have
\begin{align}
& s^4 -3  t_1 t_2 s^2 + t_1^{2} t_2^{2} =0.
\end{align}
Hence $E= \pm q^{(\alpha _1 +\alpha _2 +h_{1}+h_{2})/2}  (t_1 t_2)^{1/2} (\sqrt{5} + 1) /2 $ and $E= \pm q^{(\alpha _1 +\alpha _2 +h_{1}+h_{2})/2} $ $ (t_1 t_2)^{1/2} (\sqrt{5} - 1) /2  $ are the roots of the right hand side of Eq.(\ref{eq:c4-1-1}), which do not have multiplicity.

\subsection{The case $2N+1 +h_2 -l_2 -\beta <0 $} \label{sec:sub2} $ $

If $2N+1 +h_2 -l_2 -\beta <0 $ and the condition in Eq.(\ref{eq:assump}) is satisfied, then 
\begin{align}
 c_n (E) \sim & t_1^{-1} t_2^{-1} ( E q^{n -1 -h_1 -h_2 +\lambda _1} + q^{2n -1/2 -l_2 - \beta  } t_1 ) c_{n-1} (E)  \label{eq:rec0102} \\
& - q^{2n-1 -l_1-l_2 -\beta } t_1 ^{-1} t_2 ^{-1} c_{n-2} (E) \nonumber
\end{align}
for $n=1,2,\dots ,N+1$ under the assumption of no cancellations.
Then we have
\begin{align}
 c_1 (E) \sim & t_1^{-1} t_2^{-1} (E q^{ -h_1 -h_2 +\lambda _1} + q^{3/2 -l_2 - \beta  } t_1), \\
 c_2 (E) \sim & t_1^{-2} t_2^{-2} ( E q^{ -h_1 -h_2 +\lambda _1} + q^{3/2 -l_2 - \beta  } t_1 ) ( E q^{1 -h_1 -h_2 +\lambda _1} + q^{7/2 -l_2 - \beta  } t_1 ) \nonumber \\
& - q^{3 -l_1-l_2 -\beta } t_1 ^{-1} t_2 ^{-1} , \nonumber 
\end{align}
and the candidate of the cancellation is the term $t_2 ^{-2} q^{5 -2 l_2 - 2 \beta  } - t_1^{-1} t_2^{-1} q^{3 -l_1-l_2 -\beta } $. 
Hence there are no cancellation of the leading terms for $c_2(E)$ in the case $l_1 - l_2 -\beta \neq -2$.
If  $2+ l_1 - l_2 -\beta <0 $, then we may ignore the term $t_1^{-1} t_2^{-1} q^{3 -l_1-l_2 -\beta } $ and we have
\begin{align}
& c_2 (E) \sim t_1^{-2} t_2^{-2} ( E q^{ -h_1 -h_2 +\lambda _1} + q^{3/2 -l_2 - \beta  } t_1 ) ( E q^{1 -h_1 -h_2 +\lambda _1} + q^{7/2 -l_2 - \beta  } t_1 ).
\end{align}
If  $2+ l_1 - l_2 -\beta >0 $, then
\begin{align}
& c_2 (E) \sim t_1^{-2} t_2^{-2} [ q^{1 -2h_1 -2h_2 +2 \lambda _1} E^2 + t_1 q^{5/2 -h_1 -h_2 +\lambda _1 -l_2 - \beta  } E  - t_1 t_2 q^{3 -l_1-l_2 -\beta } ]  .
\end{align}
We assume $l_1 - l_2 -\beta \neq -2 $ and apply Eq.(\ref{eq:rec0102}) for $n=3$. Then 
\begin{align}
& c_3 (E) \sim c_{2} (E) t_1^{-1} t_2^{-1} ( E q^{2 +\lambda _1 - h_1 - h_2} +  t_1 q^{11/2 -l_2 - \beta  })  - c_{1} (E)  t_1^{-1} t_2^{-1} q^{5 -l_1 - l_2 -\beta }  \nonumber \\
& \sim t_1^{-3} t_2^{-3} ( E q^{ -h_1 -h_2 +\lambda _1} + q^{3/2 -l_2 - \beta  } t_1 ) ( E q^{1 -h_1 -h_2 +\lambda _1} + q^{7/2 -l_2 - \beta  } t_1 )  \nonumber \\
& \qquad \qquad  ( E q^{2 +\lambda _1 - h_1 - h_2} +  t_1 q^{11/2 -l_2 - \beta  } ) \\
& \quad -  t_1^{-2} t_2^{-2} q^{3 -l_1 - l_2 -\beta  } ( E q^{2 - h_1 - h_2 +\lambda _1} +  t_1 q^{11/2 -l_2 - \beta  }) \nonumber \\
& \quad -  t_1^{-2} t_2^{-2} q^{5 -l_1 - l_2 -\beta  } ( E q^{ -h_1 -h_2 +\lambda _1} + t_1 q^{3/2 -l_2 - \beta  }  ) \nonumber \\
& \sim t_1^{-3} t_2^{-3} [ E^3 q^{3 +3(\lambda _1 - h_1 - h_2)} + E^2 t_1 q^{2 (\lambda _1  -h_1-h_2)} q^{9/2-(l_2 + \beta ) } \nonumber \\
& \quad +E t_1^{2} q^{ \lambda _1  -h_1-h_2} q^{7-2(l_2 + \beta )} + t_1 ^{3} q^{21/2-3(l_2 + \beta )}  \nonumber \\
& \quad - t_1 t_2 q^{5 -l_1 - l_2 -\beta } ( 2 E q^{ \lambda _1 - h_1 - h_2} + t_1 q^{3/2 -l_2 - \beta } ) ]  \nonumber 
\end{align}
under the assumption that there are no cancellation of the leading terms.
Then the condition $l_1 - l_2 -\beta  \not \in \{ -4,-2 \} $ is sufficient for non-cancellation of the leading terms, and it is generalized as follows.
\begin{prop} \label{prop:noncancel02}
Let $M \in \{ 1,2, \dots , N+1 \}$.
A sufficient condition that the cancellation of the leading terms in the right hand side of Eq.(\ref{eq:rec0102}) for $n=1,2,\dots ,M$ does not occur is written as $l_1 - l_2 -\beta   \not \in \{ -2M+2, -2M+4, \dots ,-4,-2 \}$.
\end{prop}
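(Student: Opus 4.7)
The plan is to adapt the proof of Proposition \ref{prop:noncancel01} to the modified recursion (\ref{eq:rec0102}); the essential new feature is that $q_n$ now depends on $n$. First I would rewrite the recursion in the form
\begin{equation*}
c_n(E) = (p_n E + q_n) c_{n-1}(E) - r_n c_{n-2}(E),
\end{equation*}
with
\begin{equation*}
p_n \sim t_1^{-1} t_2^{-1} q^{n-1+\lambda_1-h_1-h_2}, \quad q_n \sim t_2^{-1} q^{2n-1/2-l_2-\beta}, \quad r_n \sim t_1^{-1} t_2^{-1} q^{2n-1-l_1-l_2-\beta},
\end{equation*}
and iterate exactly as in the derivation of Eq.(\ref{eq:cMsum}), obtaining
\begin{equation*}
c_M(E) \sim \sum_{0 \leq k \leq M/2} (-1)^k \sum \!' \prod_{l=1}^k r_{n_l+1} \prod_m \!'' (p_m E + q_m),
\end{equation*}
with the same admissibility constraints on $\sum'$ and $\prod''$ as in Eq.(\ref{eq:cMsum}).

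Next, and this is the main structural departure from Proposition \ref{prop:noncancel01}, I would identify the strongest configuration of $r$-positions for fixed $k$. A direct comparison of $r_n(p_{n+1}E + q_{n+1})$ with $(p_{n-1}E + q_{n-1}) r_{n+1}$ shows that the $E^1$ coefficients have the common exponent $3n - 1 - l_1 - l_2 - \beta + \lambda_1 - h_1 - h_2$, while the $E^0$ coefficients are $r_n q_{n+1} \sim t_1^{-1} t_2^{-2} q^{4n + 1/2 - l_1 - 2l_2 - 2\beta}$ versus $q_{n-1} r_{n+1} \sim t_1^{-1} t_2^{-2} q^{4n - 3/2 - l_1 - 2l_2 - 2\beta}$, so the latter is stronger by a factor $q^{-2}$. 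In contrast to Proposition \ref{prop:noncancel01}, one therefore pushes every $r$-factor as far to the \emph{right} as possible, so that the strongest summand for fixed $k$ is
\begin{equation*}
r_{M-2k+2}\, r_{M-2k+4} \cdots r_M \prod_{m=1}^{M-2k}(p_m E + q_m).
\end{equation*}
Inside the remaining product, the exponent of $p_m/q_m$ is $-m - 1/2 + \lambda_1 - h_1 - h_2 + l_2 + \beta$, which strictly decreases in $m$, so the coefficient of $E^l$ is strongest when the $l$ factors $p_m$ are selected at the largest indices $m = M-2k-l+1, \ldots, M-2k$.

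The main (and most error-prone, but routine) step is to sum the exponents from the three blocks of $r$'s, chosen $p$'s, and leftover $q$'s, and to collect terms to obtain
\begin{equation*}
\tilde{d}(k,l) = 2k^2 + k(2l - 2M - l_1 + l_2 + \beta) + C(l),
\end{equation*}
with $C(l)$ independent of $k$. Consequently $\tilde{d}(k,l) = \tilde{d}(k',l)$ with $k \neq k'$ is equivalent to $2(k+k') + 2l - 2M - l_1 + l_2 + \beta = 0$, i.e.\ $l_1 - l_2 - \beta = 2(k+k'+l) - 2M$. Under the constraints $k \neq k'$, $0 \leq 2k+l \leq M$, and $0 \leq 2k'+l \leq M$, the integer $k+k'+l$ takes precisely the values $\{1, 2, \ldots, M-1\}$, each $s$ being realized by $(k,k',l) = (0, 1, s-1)$. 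Hence the obstructing values of $l_1 - l_2 - \beta$ form exactly the set $\{-2M+2, -2M+4, \ldots, -4, -2\}$, and excluding this set guarantees no cancellation of leading terms for $n = 1, 2, \ldots, M$.
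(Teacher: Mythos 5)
Your proposal is correct and follows essentially the same route as the paper's proof: rewrite the recursion with the modified $q_n$, observe that the $r$-factors are now pushed to the right (and the $p$-factors to the largest indices within the remaining product), and reduce the no-cancellation condition to $\tilde{d}(k,l)=\tilde{d}(k',l)$ with $k\neq k'$, which forces $l_1-l_2-\beta=2(k+k'+l)-2M\in\{-2M+2,\dots,-2\}$. The paper leaves the final exponent computation as "repeat the argument of Proposition \ref{prop:noncancel01}," whereas you carry it out explicitly; the details check out.
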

\begin{proof}
We can prove the proposition similarly to Proposition \ref{prop:noncancel01}, although we need a slight modification.
The recursive relation for $c_n(E)$ is written as Eq.(\ref{eq:cnpnqnrn}) where
\begin{equation}
r_n \sim t_1^{-1} t_2^{-1} q^{2n-1 -l_1 - l_2 -\beta }, \; p_n \sim t_1^{-1} t_2^{-1} q^{n-1 +\lambda _1 - h_1 - h_2} , \; q_n \sim t_2 ^{-1}  q^{2n -1/2 -l_2 - \beta  } .
\end{equation}
By applying Eq.(\ref{eq:cnpnqnrn}) repeatedly, we have the expression as Eq.(\ref{eq:cMsum}).
In this case, the term $(p_{n-1} E+q _{n-1}) r_{n+1} \sim t_1^{-2} t_2^{-2} q^{2n+1 -l_1 - l_2 -\beta } (q^{n -2 +\lambda _1 - h_1 - h_2} E+ t_1 q^{2n -5/2 -l_2 - \beta }) $ is stronger than $ r_n (p_{n+1} E+q _{n+1}) \sim t_1^{-2} t_2^{-2} q^{2n-1 -l_1 - l_2 -\beta } (q^{n +\lambda _1 - h_1 - h_2} E+ t_1 q^{2n +3/2 -l_2 - \beta }) $.
By applying this relation repeatedly, we find that the strongest term in Eq.(\ref{eq:cMsum}) for the fixed $k$ is contained in
\begin{align}
&  ( p_{1} E +q_{1}) ( p_{2} E +q_{2}) \dots ( p_{M-2k} E +q_{M-2k}) r_{M-2k+2} r_{M-2k+4} \dots r_{M} \\
& \approx  \sum _{0 \leq l \leq M-2k } q_{1} q_{2} \dots q_{M-2k-l} p_{M-2k-l+1} \dots p_{M-2k}  r_{M-2k+2} r_{M-2k+4} \dots r_{M} E^l .  \nonumber
\end{align}
By repeating the argument in the proof of Proposition \ref{prop:noncancel01}, we can obtain Proposition \ref{prop:noncancel02}.
\end{proof}
We consider the expression of the leading terms roughly by using the equivalence $\approx $ under the condition of Proposition \ref{prop:noncancel02}.
On the polynomials $c_2(E)$ and $c_3(E)$, we have 
\begin{align}
& c_2 (E) \sim t_1^{-2} t_2^{-2} ( E q^{ -h_1 -h_2 +\lambda _1} + q^{3/2 -l_2 - \beta  } t_1 ) ( E q^{1 -h_1 -h_2 +\lambda _1} + q^{7/2 -l_2 - \beta  } t_1 )  \label{eq:c2gen2} \\
& \qquad \qquad - q^{3 -l_1-l_2 -\beta } t_1 ^{-1} t_2 ^{-1} \nonumber \\
& \quad \approx ( E q^{ -h_1 -h_2 +\lambda _1} + q^{3/2 -l_2 - \beta  } ) ( E q^{1 -h_1 -h_2 +\lambda _1} + q^{7/2 -l_2 - \beta  } ) - q^{3 -l_1-l_2 -\beta } , \nonumber \\
& c_3 (E) \sim t_1^{-3} t_2^{-3} ( E q^{ -h_1 -h_2 +\lambda _1} + q^{3/2 -l_2 - \beta  } t_1 ) ( E q^{1 -h_1 -h_2 +\lambda _1} + q^{7/2 -l_2 - \beta  } t_1 ) \nonumber \\
& \qquad \qquad \qquad ( E q^{2 +\lambda _1 - h_1 - h_2} + q^{11/2 -l_2 - \beta  } t_1 ) \nonumber \\
& \qquad \qquad - t_1^{-2} t_2^{-2} q^{5 -l_1 - l_2 -\beta } ( 2 E q^{ \lambda _1 - h_1 - h_2} + q^{3/2 -l_2 - \beta } t_1)  \nonumber \\
& \quad \approx ( E q^{ -h_1 -h_2 +\lambda _1} + q^{3/2 -l_2 - \beta } ) ( E q^{1 -h_1 -h_2 +\lambda _1} + q^{7/2 -l_2 - \beta  } ) ( E q^{2 +\lambda _1 - h_1 - h_2} + q^{11/2 -l_2 - \beta  } )  \nonumber \\
& \qquad \qquad - q^{5 -l_1 - l_2 -\beta } ( E q^{ \lambda _1 - h_1 - h_2} + q^{3/2 -l_2 - \beta } )  \nonumber \\
& \quad \approx c_1 (E) \{ ( E q^{1 -h_1 -h_2 +\lambda _1} + q^{7/2 -l_2 - \beta  } ) ( E q^{2 +\lambda _1 - h_1 - h_2} + q^{11/2 -l_2 - \beta  } )- q^{5 -l_1 - l_2 -\beta } \} . \nonumber 
%
\end{align}
These are generalized as follows.
\begin{prop} \label{prop:reccnappr02}
Assume that $2N+1 +h_2 -l_2 -\beta <0 $ and $l_1 - l_2 -\beta   \not \in \{ -2N, -2N+2 ,\dots ,-4,-2 \}$.\\
Then we have
\begin{align}
 c_n (E) \approx c_{n-2} (E) & [( E q^{n-1 +\lambda _1 - h_1 - h_2} +q^{2n -1/2 - l_2 -\beta  } ) \label{eq:recapprnn-202} \\
& \quad  ( E q^{n-2 +\lambda _1 - h_1 - h_2} +q^{2n-2 -1/2 - l_2-\beta  } ) - q^{2n-1 -l_1 - l_2 -\beta  } ] \nonumber 
\end{align}
for $n=2,3,\dots ,N+1$.
\end{prop}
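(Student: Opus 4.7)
The plan is to proceed by induction on $n$, taking the base case $n=2$ from Eq.(\ref{eq:c2gen2}). For the inductive step, assume Eq.(\ref{eq:recapprnn-202}) has been verified for all indices up through $n$, and establish it for $n+1$.

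The principal tool I would use is the polynomial identity (valid among leading-term polynomials under the no-cancellation hypothesis of Proposition \ref{prop:noncancel02})
\[
P_{n+1}(E)\, c_{n-1}(E) \;=\; c_{n+1}(E) + A_{n+1} B_n\, c_{n-2}(E),
\]
where $A_k = t_1^{-1}t_2^{-1}(E q^{k-1+\lambda_1-h_1-h_2}+t_1 q^{2k-1/2-l_2-\beta})$ and $B_k = t_1^{-1}t_2^{-1}q^{2k-1-l_1-l_2-\beta}$ are the recursion coefficients appearing in Eq.(\ref{eq:rec0102}). This identity is verified by expanding $P_{n+1} = A_{n+1}A_n - B_{n+1}$, using the recursion $c_n = A_n c_{n-1} - B_n c_{n-2}$ in the form $A_n c_{n-1} = c_n + B_n c_{n-2}$, and then recognising $A_{n+1} c_n - B_{n+1} c_{n-1}$ as $c_{n+1}$. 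Equivalently, $c_{n+1}(E) = P_{n+1}(E)\, c_{n-1}(E) - A_{n+1} B_n\, c_{n-2}(E)$, so the desired conclusion $c_{n+1}(E) \approx P_{n+1}(E)\, c_{n-1}(E)$ reduces to showing that the error term $A_{n+1} B_n\, c_{n-2}(E)$ has, in each $E^j$, strictly higher $q$-order than the corresponding coefficient of $P_{n+1}(E)\, c_{n-1}(E)$ as $q\to+0$.

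The main obstacle will be this coefficient-wise $q$-order comparison. The leading-term structures of $c_{n-1}(E)$ and $c_{n-2}(E)$ (for each power of $E$) are read off from the strongest-product expansion developed in the proof of Proposition \ref{prop:noncancel02}, and the comparison naturally splits into sub-cases because the dominant constant term of $P_{n+1}$ is either $q^{4n+1-2l_2-2\beta}$ or $-q^{2n+1-l_1-l_2-\beta}$ according to the sign of $2n+l_1-l_2-\beta$. In every sub-case, however, the global assumption $2N+1+h_2-l_2-\beta<0$ forces $l_2+\beta$ to exceed $h_2$ by a wide margin, and combined with the exclusion $l_1-l_2-\beta\notin\{-2,-4,\dots,-2N\}$ this is enough to guarantee that each $E^j$-coefficient of $A_{n+1}B_n\, c_{n-2}(E)$ carries a strictly larger $q$-exponent than the corresponding $E^j$-coefficient of $P_{n+1}(E)\, c_{n-1}(E)$. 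That delivers the required negligibility and closes the induction.
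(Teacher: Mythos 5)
Your decomposition $c_{n+1}=P_{n+1}c_{n-1}-A_{n+1}B_n\,c_{n-2}$ is exactly the identity the paper obtains by substituting the recursion Eq.(\ref{eq:rec0102}) into itself once, so the architecture of your induction matches the paper's. The gap is in the criterion you impose on the error term: you require each $E^j$-coefficient of $A_{n+1}B_n\,c_{n-2}$ to have \emph{strictly} larger $q$-order than the corresponding coefficient of $P_{n+1}c_{n-1}$, and this is false in general. Indeed, the coefficient of $E$ in $A_{n+1}B_n$ is
$t_1^{-2}t_2^{-2}\,q^{(2n-1-l_1-l_2-\beta)+(n+\lambda_1-h_1-h_2)}$,
while the coefficient of $E$ in $B_{n+1}A_{n-1}$ (and $B_{n+1}A_{n-1}c_{n-2}$ is, by Proposition \ref{prop:noncancel02}, a non-cancelling contribution to the retained summand $-B_{n+1}c_{n-1}$ of $P_{n+1}c_{n-1}$) is
$t_1^{-2}t_2^{-2}\,q^{(2n+1-l_1-l_2-\beta)+(n-2+\lambda_1-h_1-h_2)}$;
the exponents coincide. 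Hence for $j\geq 1$ the $E^j$-coefficient of the error term has \emph{the same} $q$-order as a contribution already present in $P_{n+1}c_{n-1}$ whenever that contribution is the dominant one (which happens in part of the parameter range, e.g.\ when $2n+l_1-l_2-\beta>0$). This is visible in the paper's explicit computation of $c_3(E)$, where precisely these two contributions combine into the factor $2E$ in $-t_1t_2q^{5-l_1-l_2-\beta}(2Eq^{\lambda_1-h_1-h_2}+t_1q^{3/2-l_2-\beta})$. A literal attempt to verify your strict inequality would therefore fail.

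The repair is small but it is a different statement from the one you propose to prove. One shows that the error term is dominated by \emph{or comparable to} the retained term $B_{n+1}c_{n-1}$ coefficient-wise: using $c_{n-1}\approx A_{n-1}c_{n-2}-B_{n-1}c_{n-3}$ with no cancellation, the $E$-parts are of equal order as above and the constant part of $A_{n+1}B_n c_{n-2}$ is weaker by a factor $q^{2}$ than that of $B_{n+1}A_{n-1}c_{n-2}$. Since both terms enter $c_{n+1}$ with the same sign, the equal-order contributions add rather than cancel, and merely change the constant in front of $q^{\mu_j}$ --- which the equivalence $\approx$ (unlike $\sim$) is designed to absorb. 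This is exactly the paper's argument. Note that this comparison requires no case split on the dominant constant term of $P_{n+1}$, and the heuristic you invoke (that $2N+1+h_2-l_2-\beta<0$ forces $l_2+\beta$ to exceed $h_2$ ``by a wide margin'') plays no role here; that hypothesis is only used to put the recursion into the form Eq.(\ref{eq:rec0102}) in the first place.
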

\begin{proof}
The formula for $n=2$ was shown in Eq.(\ref{eq:c2gen2}).
We show Eq.(\ref{eq:recapprnn-202}) for $n=m+1$ by assuming the case $n=m$.
It follows from the assumption that there are no cancellations of the leading terms on the right hand side of Eq.(\ref{eq:rec0102}) for $n=m +1$.
Hence it follows from Eq.(\ref{eq:rec0102}) that
\begin{align}
 c_{m+1} (E) \approx & \: c_{m} (E) ( E q^{m +\lambda _1 - h_1 - h_2} +q^{2m +2 - l_2 -\beta -1/2 } ) - c_{m-1} (E) q^{2m +1 -l_1 - l_2 -\beta } \label{eq:recapprnn-202pf} \\
 \approx & \:  c_{m-1} (E) ( E q^{m +\lambda _1 - h_1 - h_2} +q^{2m +2 - l_2 -\beta -1/2 } )  ( E q^{m-1 +\lambda _1 - h_1 - h_2} +q^{2m - l_2 -\beta -1/2 } ) \nonumber \\
& - c_{m-2} (E) q^{2m - 1 -l_1 - l_2 -\beta } ( E q^{m +\lambda _1 - h_1 - h_2} +q^{2m +2 - l_2 -\beta -1/2 } ) \nonumber \\
& - c_{m-1} (E) q^{2m +1 -l_1 - l_2 -\beta } . \nonumber 
%
\end{align}
It follows from Eq.(\ref{eq:rec0102}) for $n=m-1$ that 
\begin{align}
c_{m-1} (E) \approx & \: c_{m-2} (E)  ( E q^{m-2 +\lambda _1 - h_1 - h_2} +q^{2m -2 - l_2 -\beta -1/2 } )- c_{m-3} (E) q^{2m -3 -l_1 - l_2 -\beta }.
\label{eq:cm-1appr}
%
\end{align}
Since there are no cancellations of the leading terms on the right hand side of Eq.(\ref{eq:cm-1appr}), the term $ c_{m-1} (E) q^{2m +1 -l_1 - l_2 -\beta }$ is stronger than $ c_{m-2} (E) q^{2m +1 -l_1 - l_2 -\beta }   ( E q^{m-2 +\lambda _1 - h_1 - h_2} +q^{2n -2 - l_2 -\beta -1/2 } ) $ and it is stronger than $c_{m-2} (E) q^{2m - 1 -l_1 - l_2 -\beta } ( E q^{m +\lambda _1 - h_1 - h_2} +q^{2m +2 - l_2 -\beta -1/2 } ) $.
Hence we may reglect the term $c_{m-2} (E) q^{2m - 1 -l_1 - l_2 -\beta } ( E q^{m +\lambda _1 - h_1 - h_2} +q^{2m +2 - l_2 -\beta -1/2 } ) $ on the equivalence $\approx $ in Eq.(\ref{eq:recapprnn-202pf}) and we obtain (\ref{eq:recapprnn-202}) for $n=m+1$.
\end{proof}
By applying Proposition \ref{prop:reccnappr02} repeatedly, we obtain the approximation of the spectral polynomial $c_{N+1} (E) $ as follows.
\begin{thm}
We assume Eq.(\ref{eq:assump}), $2N+1 +h_2 -l_2 -\beta <0 $ and $l_1 - l_2 -\beta   \not \in \{ -2N, -2N+2 ,\dots ,-4,-2 \}$.
Set 
\begin{align}
p _n (E)& = ( E q^{n-1 +\lambda _1 - h_1 - h_2} +q^{2n -1/2 - l_2 -\beta  } ) \\
& \qquad \quad ( E q^{n-2 +\lambda _1 - h_1 - h_2} +q^{2n-2 -1/2 - l_2-\beta  } ) - q^{2n-1 -l_1 - l_2 -\beta  }  ,\nonumber \\
\tilde{c}_{N+1} (E) & = \left\{ \begin{array}{ll}
\displaystyle \prod_{n=1}^{(N+1)/2} p _{2n} (E), & \mbox{$N$ is odd,} \\
\displaystyle ( E q^{\lambda _1 - h_1 - h_2} +q^{3/2 - l_2 -\beta  } ) \prod_{n=1}^{N/2} p _{2n +1} (E), & \mbox{$N$ is even.} 
\end{array}
\right. \nonumber 
\end{align}
Then we have $c_{N+1} (E) \approx \tilde{c}_{N+1} (E)$.
\end{thm}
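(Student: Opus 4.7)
The plan is to iterate Proposition \ref{prop:reccnappr02}, which gives the key recurrence $c_n(E) \approx c_{n-2}(E) p_n(E)$ for $n = 2, 3, \ldots, N+1$. Since this recursion connects $c_n$ only with $c_{n-2}$, the even-indexed and odd-indexed polynomials form two independent chains, which I would analyze separately depending on the parity of $N+1$.

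For $N$ odd (so $N+1$ is even), I would proceed by induction on $k$ to show $c_{2k}(E) \approx \prod_{j=1}^{k} p_{2j}(E)$ for $1 \leq k \leq (N+1)/2$. The base case $k=1$ is $c_2(E) \approx p_2(E)$, which is exactly Proposition \ref{prop:reccnappr02} applied with $n=2$ (using $c_0(E) = 1$). The inductive step follows by applying the recursion $c_{2k+2}(E) \approx c_{2k}(E) p_{2k+2}(E)$ and the inductive hypothesis. Setting $k = (N+1)/2$ yields the desired formula.

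For $N$ even (so $N+1$ is odd), I would first compute the base case $c_1(E)$ directly from Eq.(\ref{eq:rec0102}) with $c_{-1}(E) = 0$ and $c_0(E) = 1$, which yields $c_1(E) \approx E q^{\lambda_1 - h_1 - h_2} + q^{3/2 - l_2 - \beta}$. Then, again by induction on $k$, I would show $c_{2k+1}(E) \approx c_1(E) \prod_{j=1}^{k} p_{2j+1}(E)$. Setting $k = N/2$ gives the desired formula, and the prefactor in the theorem's definition of $\tilde{c}_{N+1}(E)$ agrees precisely with the leading-term expression for $c_1(E)$.

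The main technical subtlety is that each application of Proposition \ref{prop:reccnappr02} requires the equivalence $\approx$ to be preserved when multiplying by the polynomial factor $p_n(E)$; in particular, one must ensure no new cancellations appear among the leading terms of the resulting product. This is guaranteed by the standing hypothesis $l_1 - l_2 - \beta \not\in \{-2N, -2N+2, \ldots, -4, -2\}$, which, via Proposition \ref{prop:noncancel02}, rules out cancellation of leading terms at every recursive step up to $n = N+1$. I do not expect any essentially new difficulty beyond what is already addressed in the proof of Proposition \ref{prop:reccnappr02}; the theorem is simply the explicit unwinding of that recursion.
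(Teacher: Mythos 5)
Your proposal is correct and is exactly the paper's route: the paper proves this theorem simply by noting that it follows from applying Proposition \ref{prop:reccnappr02} repeatedly, which is precisely the even/odd-chain induction you spell out (with the same base cases $c_0(E)=1$ and $c_1(E)\approx Eq^{\lambda_1-h_1-h_2}+q^{3/2-l_2-\beta}$). Your remark about needing the no-cancellation hypothesis to multiply $\approx$-equivalences through by the factors $p_n(E)$ is a fair and correctly resolved elaboration of a point the paper leaves implicit.
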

We investigate the zeros of $p_n (E)$ and $ \tilde{c}_{N+1} (E) $ as $q\to +0$.
If $ l_1 - l_2 -\beta <2 -2n$, then 
\begin{align}
p _n (E) \sim & q^{2n-3 +2(\lambda _1 - h_1 - h_2)} ( E +q^{n-3/2+\lambda_1 + l_{1}+\alpha _1 + \alpha _2 } ) ( E +q^{n-5/2+\lambda _1 +l_{1}+\alpha _1 + \alpha _2 } ) . 
%
\end{align}
If $2- 2 n < l_1 - l_2 -\beta  <3 -2n$, then 
\begin{align}
p _n (E) \sim & q^{2n-3 +2(\lambda _1 - h_1 - h_2)} (E+ q^{n-5/2+\lambda _1 +l_{1}+\alpha _1 + \alpha _2 } ) (E- q^{-n+5/2-\lambda _1 -l_{1}+h_{1}+h_{2} }). \label{eq:14n200} 
%
\end{align}
If $3 - 2 n < l_1 - l_2 -\beta  $, then the solutions of the quadratic equation $p_n(E)=0$ satisfy 
\begin{align}
E & = \frac{q^{n-5/2+\lambda _1 +l_{1}+\alpha _1 + \alpha _2 } \pm \sqrt{ q^{2(n-5/2+\lambda _1 +l_{1}+\alpha _1 + \alpha _2 )}+ 4 q^{\alpha _1 + \alpha _2  +h_1 + h_2 } }}{2} \nonumber \\
& \sim \pm q^{(\alpha _1 + \alpha _2  +h_1 + h_2 )/2} , 
\end{align} 
as $q \to +0$.

We describe the zeros of polynomial $\tilde{c}_{N+1} (E)  $ as $q\to +0$ by dividing into parts.\\
(i) If $ l_{1} - l_{2}  -\beta < -2N $, then 
\begin{align}
& E \sim  q^{j-3/2+\lambda _1 +l_{1}+\alpha _1 +\alpha _2 } \ \ \ \left( 1 \leq j \leq N + 1 \right).
\label{eq:asym2-1}
\end{align}
(ii-1) If $ l_{1} - l_{2}  -\beta > -1  $ and $N$ is odd, then
\begin{equation}
  E \sim - q^{( \alpha _1 +\alpha _2 +h_{1}+h_{2})/2}, \ q^{(\alpha _1 +\alpha _2 +h_{1}+h_{2})/2} \ \ \ \left( \mbox{$(N + 1)/2$-ple} \right).
\end{equation}
(ii-2) If $ l_{1} - l_{2} -\beta > -3 $, $ l_{1} - l_{2} -\beta \neq - 2 $ and $N$ is even, then
\begin{align}
  E  \sim & - q^{(\alpha _1 +\alpha _2 +h_{1}+h_{2})/2}, \ q^{(\alpha _1 +\alpha _2 +h_{1}+h_{2})/2} \ \ \ \left( \mbox{$N/2$-ple} \right),\\
          &  - q^{-1/2+\lambda _1 +l_{1}+\alpha _1 +\alpha _2 }. \nonumber 
\end{align}
(iii-1) If $ - 2N+ 4m < l_{1} - l_{2} -\beta < - 2N + 4m + 1 $ for some $m \in \Zint $ such that $0 \leq m \leq (N-1)/2$, then 
\begin{align}
  E &\sim  - q^{j-3/2+\lambda _1 +l_{1}+\alpha _1 +\alpha _2 }, \ \ \ \left( 1 \leq j \leq N-2m \right) , \\
    & \ \ \ \ q^{- N +2m +3/2-\lambda_1 -l_{1}+h_{1}+h_{2}}, \nonumber \\
    & \ \ \ \ - q^{(l_{3}+l_{4}+h_{1}+h_{2})/2}, \ q^{(l_{3}+l_{4}+h_{1}+h_{2})/2} \ \ \ \left( \mbox{$m$-ple} \right) .  \nonumber 
\end{align}
(iii-2) If $ - 2N+4m -3 <  l_{1} - l_{2} -\beta < - 2N +4m $ for some $m \in \Zint $ such that $1\leq m \leq (N-1)/2$ and $l_{1} - l_{2} -\beta \neq - 2N +4m -2 $, then 
\begin{align}
  E &\sim - q^{j-3/2+\lambda _1 +l_{1}+\alpha _1 +\alpha _2}, \ \ \ \left( 1 \leq j \leq N -2m +1 \right) , \\
    & \ \ \ \ - q^{(\alpha _1 +\alpha _2 +h_{1}+h_{2})/2}, \ q^{(\alpha _1 +\alpha _2 +h_{1}+h_{2})/2}, \ \ \ \left( \mbox{$m$-ple} \right) . \nonumber 
\end{align}
We can also discuss the zeros of the polynomials $c_{N+1} (E) $ as $q\to +0$ by comparing with the zeros of $ \tilde{c}_{N+1} (E) $ as the case $1 +h_2 -l_2 - \beta >0  $.
In particular, if $l_{1} - l_{2}  -\beta < -2N $, then the zeros of the polynomial $c_{N+1} (E) $ are written as $E_j(q)$ $(j=1,\dots N+1)$ such that $E_j(q) \sim -t_1 q^{j-3/2+\lambda _1 +l_{1}+\alpha _1 +\alpha _2 }$ for sufficiently small $q(>0)$ (see \cite{KST}), and it is compatible with the equivalence $c_{N+1} (E) \approx \tilde{c}_{N+1} (E) $ and Eq.(\ref{eq:asym2-1}).

\subsection{The case $-2N < 1 +h_2 -l_2 - \beta <0 $} \label{sec:sub3} $ $

We consider the polynomials $c_n(E)$ $(n=1,2,\dots ,N+1)$ for the case $-2N < 1 +h_2 -l_2 - \beta <0 $ and $1 +h_2 -l_2 - \beta \not \in \{-2N+2, \dots , -4,-2 \} $ with the condition in Eq.(\ref{eq:assump}).
In this case there exists $K \in \{ 1,2, \dots ,N \}$ such that $-2K< 1 +h_2 -l_2 - \beta < -2K+2$.
It follows from Eq.(\ref{eq:rec00}) that the polynomials $c_n(E)$ satisfy
\begin{align}
c_n (E) \sim & \: t_1^{-1} t_2^{-1}  (  E q^{n -1 -h_1 -h_2 +\lambda _1} + t_1 q^{ 2n -1/2 -l_2 - \beta  }) c_{n-1} (E)  - t_1^{-1} t_2^{-1} q^{2n-1 -l_1-l_2 -\beta } c_{n-2} (E) 
\label{eq:rec03-01}
%
\end{align}
for $ n=1,2,\dots ,K $ and
\begin{align}
c_n (E) \sim & \: t_1^{-1} t_2^{-1} ( E q^{n -1 -h_1 -h_2 +\lambda _1} + t_1 q^{1/2 -h_2 }) c_{n-1} (E) - t_1^{-1} t_2^{-1} q^{2n-1 -l_1-l_2 -\beta } c_{n-2} (E) 
\label{eq:rec03-02}
%
\end{align}
for $n=K +1 , K+2, \dots ,N+1$ under the assumption that there are no cancellations of the leading terms of the cofficients of $E^j$ ($j=0, 1, \dots  $) on the right hand sides.

We add the condition $h_2 - l_1 +1 >0 $ to avoid difficulty.
Then we obtain the following proposition.
\begin{prop} \label{prop:case3}
If $h_2 - l_1 +1 >0 $ and there exists $K \in \{ 1,2, \dots ,N \}$ such that $-2K< 1 +h_2 -l_2 - \beta < -2K+2$, then $c_n(E)$ satisfies
\begin{align}
& c_n (E) \sim \left\{ 
\begin{array}{l}
t_1^{-1} t_2^{-1} ( E q^{n -1 -h_1 -h_2 +\lambda _1} + q^{2n -1/2 -l_2 - \beta  } t_1 ) c_{n-1} (E) , \\
\qquad \qquad \qquad \qquad \qquad \qquad \qquad  n=1,2,\dots ,K, \\
t_1^{-1} t_2^{-1} ( E q^{n -1 -h_1 -h_2 +\lambda _1} + q^{1/2 -h_2 } t_1 ) c_{n-1} (E), \\
\qquad \qquad \qquad \qquad  n=K+1 ,K+2,\dots ,N+1.
\end{array}
\right.
\label{eq:rec03}
\end{align}
\end{prop}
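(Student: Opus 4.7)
The plan is to run an induction on $n$ along the same lines as Propositions \ref{prop:reccnappr01} and \ref{prop:reccnappr02}, but producing a \emph{one-step} recurrence of the form $c_n(E) \sim (\,\cdot\,) c_{n-1}(E)$ under the stronger equivalence $\sim$. The first preliminary step is to decide which of the two ``constant-in-$E$'' $q$-powers in Eq.(\ref{eq:rec00}) dominates. Setting $a := 1 + h_2 - l_2 - \beta$, one has $2n - 1/2 - l_2 - \beta < 1/2 - h_2$, i.e.\ $q^{2n-1/2-l_2-\beta}$ dominates $q^{1/2-h_2}$ as $q \to 0^+$, precisely when $n < 1 - a/2$. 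The hypothesis $-2K < a < -2K+2$ places the threshold $1 - a/2$ strictly between $K$ and $K+1$, so $q^{2n - 1/2 - l_2 -\beta}$ wins for $n = 1,\ldots, K$ and $q^{1/2-h_2}$ wins for $n = K+1,\ldots, N+1$. This is the source of the two branches in Eq.(\ref{eq:rec03}).

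The inductive step assumes Eq.(\ref{eq:rec03}) up to index $n-1$. The induction hypothesis then exhibits $c_{n-1}(E)$ as a telescoped product of linear factors whose coefficients are all positive (because $t_1, t_2 > 0$ and every factor is a sum of two positive monomials), so each coefficient $[E^j] c_{n-1}(E)$ has an explicit leading monomial $t_1^{-j} t_2^{-(n-1)} C_j q^{e_{n-1,j}}$ with $C_j > 0$ and an easily computed exponent $e_{n-1,j}$. Multiplying by the retained bracket $(E q^{a_n} + t_1 q^{b'_n})$, with $b'_n$ the dominant constant exponent identified above, again produces only positive terms at each $E^j$ level, so no cancellation can occur in the kept piece. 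It therefore suffices to show, coefficient by coefficient in $E$, that the two pieces discarded from the full right-hand side of Eq.(\ref{eq:rec00}) — the subdominant constant $t_1 q^{b''_n}$ times $c_{n-1}(E)$, and the correction $-q^{2n-1-l_1-l_2-\beta} c_{n-2}(E)$ — both produce strictly larger $q$-exponents than the retained bracket.

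The subdominant constant is immediate from the dichotomy in the first paragraph. The main obstacle is the $c_{n-2}$ term, and this is where the hypothesis $h_2 - l_1 + 1 > 0$ enters. Writing $e^{\mathrm{keep}}_{n,j} := \min(a_n + e_{n-1,j-1},\ b'_n + e_{n-1,j})$ and $e^{\mathrm{drop}}_{n,j} := (2n-1-l_1-l_2-\beta) + e_{n-2,j}$, a short computation with the explicit formula for $e_{m,j}$ reduces the required inequality $e^{\mathrm{drop}}_{n,j} > e^{\mathrm{keep}}_{n,j}$, for every admissible $j$, to a single condition whose tightest instance (attained at $j = 0$) is $l_1 < l_2 + \beta + 2 - 2n$. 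For $n \leq K$, the upper bound $a < -2K + 2$ yields $l_2 + \beta + 2 - 2n > h_2 + 1$, so this inequality collapses to $l_1 < h_2 + 1$. An analogous computation for $n \geq K+1$, using now the lower bound $a > -2K$ together with the change of linear factor at $k = K+1$, again reduces to $l_1 < h_2 + 1$, i.e.\ to $h_2 - l_1 + 1 > 0$. The transition across $n = K+1$ is harmless because it merely swaps one positive linear factor in the product for another. This closes the induction and yields Eq.(\ref{eq:rec03}) for all $n = 1,\ldots, N+1$.
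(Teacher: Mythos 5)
Your proposal is correct and follows essentially the same route as the paper's proof: an induction that telescopes the two-term recurrence into a one-step relation by substituting the previous step, selects the dominant constant monomial via the threshold $n \lessgtr 1-(1+h_2-l_2-\beta)/2 \in (K,K+1)$, and discards the $c_{n-2}$ correction by the exponent comparison that reduces (via the bounds $-2K<1+h_2-l_2-\beta<-2K+2$) to $h_2-l_1+1>0$. Your coefficient-wise bookkeeping with the $j=0$ instance being the binding one is just a more explicit rendering of the paper's comparison of the constant terms of the quadratic bracket against $q^{2n-1-l_1-l_2-\beta}$.
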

\begin{proof}
Eq.(\ref{eq:rec03}) for $n=1$ follows from Eq.(\ref{eq:rec03-01}).
We show Eq.(\ref{eq:rec03}) for $n=m$ $(m=2,\dots ,K)$ by assuming Eq.(\ref{eq:rec03}) for $n=m-1$.
It follows from  Eq.(\ref{eq:rec03}) for $n=m-1$ that the right hand side of Eq.(\ref{eq:rec03-01}) for $n=m$ is written as
\begin{align}
& t_1^{-2} t_2^{-2} \{ ( E q^{m -1 -h_1 -h_2 +\lambda _1} + q^{2m -1/2 -l_2 - \beta  } t_1 )( E q^{m -2 -h_1 -h_2 +\lambda _1} + q^{2m -5/2 -l_2 - \beta  } t_1 )  \label{eq:rec03-1} \\
& - t_1 t_2 q^{2m-1 -l_1-l_2 -\beta }  \} c_{m-2} (E) .  \nonumber 
\end{align}
Then it follows from $ 4m -3 -2 l_2 - 2 \beta - (2m-1 -l_1-l_2 -\beta ) =  2m +l_1 -2 - l_2 - \beta < 2K +h_2 -1 - l_2- \beta  < 0 $ that we may ignore the term $t_1^{-1} t_2^{-1} q^{2m-1 -l_1-l_2 -\beta } c_{m-2} (E) $ in Eq.(\ref{eq:rec03-1}) and that in Eq.(\ref{eq:rec03-01}) for $n=m$.
Thus we have shown Eq.(\ref{eq:rec03}) for $n=m$.
Hence we obtain Eq.(\ref{eq:rec03}) for $n=1,\dots ,K$.

Next we show Eq.(\ref{eq:rec03}) for $n=K+1$.
It follows from  Eq.(\ref{eq:rec03}) for $n=K$ that the right hand side of Eq.(\ref{eq:rec03-02}) for $n=K+1$ is written as 
\begin{align}
& \: t_1^{-1} t_2^{-1} [  E q^{K -h_1 -h_2 +\lambda _1} + t_1 q^{1/2 -h_2 }] c_{K} (E) - t_1^{-1} t_2^{-1} q^{2K+1 -l_1-l_2 -\beta } c_{K-1} (E) \label{eq:rec03-3} \\
\sim & \: t_1^{-2} t_2^{-2} [ ( E q^{K -h_1 -h_2 +\lambda _1} + t_1 q^{1/2 -h_2 })( E q^{K -1 -h_1 -h_2 +\lambda _1} + q^{2K -1/2 -l_2 - \beta  } t_1 ) \nonumber \\
& - t_1 t_2 q^{2K+1 -l_1-l_2 -\beta } ] c_{K-1} (E) .  \nonumber 
%
\end{align}
It follows from $h_2 - l_1 +1 >0  $ that we may ignore the term $t_1^{-1} t_2^{-1} q^{2K+1 -l_1-l_2 -\beta } c_{K-1} (E)$ in Eq.(\ref{eq:rec03-3}) and that in Eq.(\ref{eq:rec03-02}) for $n=K+1$.
Hence we obtain Eq.(\ref{eq:rec03}) for $n=K+1$.
Let $m \in \{ K+1,\dots ,N\}$ and assume that Eq.(\ref{eq:rec03}) holds for $n= m $.
Then we have
\begin{align}
& t_1^{-2} t_2^{-2} \{( E q^{m -h_1 -h_2 +\lambda _1} + t_1 q^{1/2 -h_2 } ) c_{m} (E) - q^{2m +1 -l_1-l_2 -\beta } c_{m-1 } (E) \} \\
&  \sim  t_1^{-1} t_2^{-1} \{ (  E q^{m -h_1 -h_2 +\lambda _1} + t_1 q^{1/2 -h_2 })( E q^{m -h_1 -h_2 +\lambda _1} + t_1 q^{1/2 -h_2 } ) \nonumber \\
& \qquad - t_1 t_2 q^{2m +1 -l_1-l_2 -\beta } ]\} c_{m-1 } (E) . \nonumber
%
\end{align}
Since $1-2h_2 -( 2m +1 -l_1-l_2 -\beta )  < -2h_2 - 2K -2  +l_1 +l_2 + \beta <-h_2 - 2K -1 +l_2 + \beta <0 $, we may neglect the term $t_1^{-1} t_2^{-1} q^{2m +1 -l_1-l_2 -\beta } c_{m-1 } (E)  $ and we have  Eq.(\ref{eq:rec03}) for $n= m +1$.
\end{proof}
\begin{thm}
We assume Eq.(\ref{eq:assump}), $h_2 - l_1 +1 >0 $ and there exists $K \in \{ 1,2, \dots ,N \}$ such that $-2K< 1 +h_2 -l_2 - \beta < -2K+2$.\\
(i) The spectral polynomial $c_{N+1} (E)$ satisfies
\begin{align}
 c_{N+1} (E) & \sim (t_1 t_2 )^{-N-1} \prod _{n=1}^K ( E q^{n -1 -h_1 -h_2 +\lambda _1} + q^{2n -1/2 -l_2 - \beta  } t_1 ) \nonumber \\
& \qquad \prod _{n=K+1}^{N+1} ( E q^{n -1 -h_1 -h_2 +\lambda _1} + q^{1/2 -h_2 } t_1 ) \label{eq:cN+1case3} \\
& = (t_1 t_2 )^{-N-1} q^{(N/2 + \lambda _1 -h_1 -h_2)(N+1) } \prod _{n=1}^K ( E + q^{n-3/2 + \lambda _1 + l_1 + \alpha _1 +\alpha _2 } t_1 ) \nonumber \\
& \qquad \prod _{n=K+1}^{N+1} ( E + q^{-n + 3/2 +h_1 -\lambda _1 } t_1 ) \nonumber 
\end{align}
(ii) There exist solutions $E_j (q)$ $(j=1,2,\dots ,N+1)$ to the equation $c_{N+1} (E) =0 $ for sufficiently small $q$ such that
\begin{align}
& E _j(q)  \sim \left\{ 
\begin{array}{ll}
-q^{j-3/2 + \lambda _1 + l_1 + \alpha _1 +\alpha _2 } t_1 , & j=1,\dots ,K,\\
-q^{-j + 3/2 +h_1 -\lambda _1  } t_1 , & j=K+1, \dots , N+1.
\end{array}
\right.
\label{eq:Ekqcase3}
\end{align}
\end{thm}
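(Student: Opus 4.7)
The plan is to derive part (i) by iterating Proposition \ref{prop:case3}, and then to extract part (ii) by identifying the zeros of the resulting factored polynomial together with a continuity-of-roots argument.

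For part (i), starting from $c_0(E)=1$, I would apply Eq.(\ref{eq:rec03}) recursively: the first branch for $n=1,\dots,K$ and then the second branch for $n=K+1,\dots,N+1$. Each step multiplies the previous polynomial by a single linear factor in $E$, so iterating yields the first line of Eq.(\ref{eq:cN+1case3}):
\[
c_{N+1}(E) \sim (t_1 t_2)^{-(N+1)} \prod_{n=1}^{K} \bigl(E\, q^{n-1-h_1-h_2+\lambda_1} + q^{2n-1/2-l_2-\beta} t_1 \bigr) \prod_{n=K+1}^{N+1} \bigl(E\, q^{n-1-h_1-h_2+\lambda_1} + q^{1/2-h_2} t_1 \bigr).
\]
To pass to the second line, factor $q^{n-1-h_1-h_2+\lambda_1}$ out of each linear factor. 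Using $\lambda_1 = (h_1+h_2-l_1-l_2-\alpha_1-\alpha_2-\beta+2)/2$, a short computation shows that the residual $q$-exponents in the linear factors simplify exactly to $n-3/2+\lambda_1+l_1+\alpha_1+\alpha_2$ for $n\le K$ and to $-n+3/2+h_1-\lambda_1$ for $n\ge K+1$. The accumulated prefactor works out to $q^{(N/2+\lambda_1-h_1-h_2)(N+1)}$, since $\sum_{n=1}^{N+1}(n-1)=N(N+1)/2$.

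For part (ii), the second line of Eq.(\ref{eq:cN+1case3}) displays $\tilde{c}_{N+1}(E)$ as an explicit product of linear factors whose zeros are precisely the candidates $-q^{e_j}t_1$ appearing in Eq.(\ref{eq:Ekqcase3}). The remaining task is to upgrade this polynomial equivalence into an assertion about actual roots of $c_{N+1}(E)$. I would substitute $E=-q^{e_j}\xi$ into $c_{N+1}(E)$, multiply through by an appropriate negative power of $q$ so that the limit $q\to+0$ becomes a finite nonzero polynomial in $\xi$ vanishing at $\xi=t_1$, and then invoke a Rouch\'e-type (or implicit function) argument to obtain an exact zero $\xi_j(q)\to t_1$, i.e.\ $E_j(q)\sim -q^{e_j}t_1$.

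The main obstacle I anticipate is the possible coincidence $e_K=e_{K+1}$, which occurs precisely when $1+h_2-l_2-\beta=1-2K$; this value lies strictly inside the admissible open interval $(-2K,-2K+2)$, so two approximate roots may share the same leading $q$-order. In that case $\tilde{c}_{N+1}(E)$ carries a double zero at $-q^{e_K}t_1$, and the continuity-of-roots argument must be refined (for instance via a quadratic local factor extracted from the Newton polygon) to produce two nearby zeros of $c_{N+1}(E)$ with matching leading asymptotics, which is consistent with the conclusion. Outside this coincidence, the approximate zeros have pairwise distinct $q$-orders and the single-root perturbation argument suffices.
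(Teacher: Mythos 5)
Your argument for part (i) is essentially the paper's: iterate Proposition \ref{prop:case3} starting from $c_0(E)=1$, then factor $q^{n-1-h_1-h_2+\lambda_1}$ out of each linear term; your exponent bookkeeping and the prefactor $q^{(N/2+\lambda_1-h_1-h_2)(N+1)}$ check out. For part (ii) the paper simply asserts that the hypothesis $-2K<1+h_2-l_2-\beta<-2K+2$ forces all zeros of the right-hand side of Eq.(\ref{eq:cN+1case3}) to be distinct in the ultradiscrete limit and then cites the theorem in the appendix of \cite{KST}; your worry about a coincidence of $q$-orders is well founded and in fact exposes a small gap in that justification. Writing $e_j$ for the exponents in Eq.(\ref{eq:Ekqcase3}), one has $e_i-e_j=(i+j-2)+(1+h_2-l_2-\beta)$ for $i\le K<j$, so within the admissible open interval a coincidence occurs exactly at the interior value $1+h_2-l_2-\beta=-2K+1$, and then for \emph{every} pair with $i+j=2K+1$, not only $(K,K+1)$ as you single out. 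Since the coinciding pairs have pairwise distinct common exponents, each contributes its own length-two edge of the Newton polygon whose reduced polynomial is a nonzero multiple of $(s+t_1)^2$ (because $c_{N+1}\sim\tilde c_{N+1}$ coefficientwise), so your proposed refinement does yield two roots with the stated leading behaviour $E\sim -q^{e_i}t_1$ on each such edge; outside that single parameter value your single-root perturbation argument coincides with the paper's appeal to \cite{KST}. In short, your proof is correct, follows the paper's route, and additionally repairs the degenerate case that the paper's one-line argument for (ii) overlooks.
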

\begin{proof}
We obtain (i) by applying Proposition \ref{prop:case3}.
Then it follows from the assumption $-2K< 1 +h_2 -l_2 - \beta < -2K+2$ that all of the zeros of the right hand side of Eq.(\ref{eq:cN+1case3}) are different by the ultradiscrete limit ($q \to +0 $).
Hence (ii) follows from the theorem in the appendix of \cite{KST}.
\end{proof}

\section{Concluding remarks} \label{sec:rem}

In this paper, we investigated roots of the spectral polynomial $c_{N+1} (E) $ as $q\to +0$ for three cases in the sections \ref{sec:sub1}, \ref{sec:sub2} and \ref{sec:sub3}.
Recall that the polynomial-type solution of the $q$-Heun equation exists, if the accessory parameter $E$ is a root of the spectral polynomial $c_{N+1} (E) $, but we did not investigate the polynomial-type solutions of the $q$-Heun equation in this paper, which we leave as a problem.

Another problem is to consider polynomial-type solutions to degenerations of the $q$-Heun equation.
Ultradiscrete limit would be applicable to those cases.
Note that degenerations of the $q$-Heun equation would be obtained similarly to the degenerations of Heun's differential equation (see \cite{Ron}).

\section*{Acknowledgements}
The third author was supported by JSPS KAKENHI Grant Number JP26400122.

\end{document}